\documentclass[11pt,a4paper,final]{amsart}

\textwidth 6.2in \oddsidemargin.15in \evensidemargin.15in
\parskip.15cm
\baselineskip.55cm
\textheight 690pt
\voffset -1.3cm

\usepackage{amsfonts,amsmath,amssymb,indentfirst,mathrsfs,amscd}
\usepackage[mathscr]{eucal}
\usepackage[active]{srcltx}
\usepackage{graphicx}
\usepackage{nicefrac}
\usepackage{color}
\usepackage[utf8]{inputenc}
\usepackage[spanish, english]{babel}
\usepackage{booktabs}
\usepackage{multirow}
\usepackage{verbatim}
\usepackage{calrsfs}
\usepackage[all]{xy}
\usepackage{graphicx}
\usepackage{stmaryrd}
\usepackage{enumitem}
\usepackage{empheq}
\usepackage{ dsfont }

\usepackage{hyperref}

\DeclareMathAlphabet{\omscal}{OMS}{zplm}{m}{n}
\usepackage[all]{xy}
\usepackage{stmaryrd}
\usepackage{dsfont}

          %Rank%
        %Jacobian%
           %Picard%
           %divisor of%
      %Parabolic degree%
       %Parabolic slope%
           %Image%

        %ParHom%
       %SParHom%
         %ParAut%
        %ParEnd%
       %SParEnd%
             %differentiated adjoint ad%
             %adjoint Ad%
\DeclareMathOperator{\Hom}{Hom\,}           %Hom%
           %End%
\DeclareMathOperator{\Id}{Id\,}             %End%
\DeclareMathOperator{\tr}{tr}             %Trace Tr%
       %Parabolic extensions ParExt%
       %extensions Ext%
 %Parabolic tensor%
\DeclareMathOperator{\Aut}{Aut\,}           %Aut%

\DeclareMathOperator{\Gr}{Gr}
\DeclareMathOperator{\Spec}{Spec\,}
       %trace of a matrix%
\newcommand{\bk}{{\mathbb{K}}}
 %i con acento%
 %norma%

\newcommand{\cA}{\omscal{A}}
\newcommand{\cC}{\omscal{C}}
\newcommand{\cD}{\omscal{D}} %D-modules
\newcommand{\cF}{\omscal{F}} %Sheaf
 %Gauge group%
\newcommand{\cO}{\omscal{O}} %Holomorphic functions sheaf%
 %Lie derivative 
\newcommand{\cM}{\omscal{M}} %Moduli space% %moduli of parabolic bundles% %moduli of U(p,q) bundles%
 %Space of minimal points of the Morse function%
 %Moduli of K(D) pairs%
 %Moduli of solutions of Hitchin's equations, contructed by Konno%
\newcommand{\cU}{\omscal{U}} %Moduli of stable U(p,q) parabolic Higgs bundles%
\newcommand{\cX}{\omscal{X}}
\newcommand{\cZ}{\omscal{Z}}

\newcommand{\cY}{\omscal{Y}}

 % an extension%
\newcommand{\x}{\times}
 %Meromorphic function sheaf%
\newcommand{\Cos}[1]{\widehat{\cM}_{#1}}

\newcommand{\CC}{\mathbb{C}} %Complex numbers%
\newcommand{\QQ}{\mathbb{Q}} %Rational numbers%
\newcommand{\PP}{\mathbb{P}} %projective space%
\newcommand{\FF}{\mathbb{F}} %Hypercohomology, quaternions..%
 %Real numbers%
\newcommand{\ZZ}{\mathbb{Z}} %Integer numbers%
 %Natural numbers%
 %Torus numbers%
 %Torus numbers%
 %Torus numbers%
 %Stabilizer%
 %Lie algebra of G%
 %Lie algebra of H%
 %Lie algebra of U%
 %Lie algebra of A%
 %Lie algebra of B%
 %Lie algebra of M%

 %Lie algebra of GL%

 %i%

\newcommand\CBord[3]{\mathbf{Bd}_{{#1 #3}}^{#2}}
\newcommand\CBordp[1]{\CBord{#1}{}{}}
\newcommand\CBordpp[1]{\mathbf{Bdp}_{{#1}}}

\newcommand\Mod[1]{{#1}\textrm{-}\mathbf{Mod}}

\newcommand\MHS[1]{\mathbf{MHS}}
\newcommand\Var[1]{\mathbf{Var}_{#1}}

\newcommand\K[1]{\mathrm{K}#1}

\newcommand\Varrel[1]{\mathbf{Var}/{#1}}

					% Free non-parabolic
							% Surface group non-parabolic
 % Free tr=2
			% Surface group tr=2
			% Free J+
						% Surface group J+
			% Free diagonal
						% Surface group diagonal
		% Free mixed
					% Surface group mixed

\newcommand\Rep[1]{\mathfrak{X}_{#1}}
\newcommand\Unit[1]{\mathds{1}_{#1}}
\newcommand\I{\mathrm{I}}
\newcommand{\G}{\Gamma}

\newcommand\set[1]{\left\{#1\right\}}

\newcommand\Aff[2]{\mathrm{AGL}_{#1}(#2)}
\newcommand\ASO[2]{\mathrm{ASO}_{#1}(#2)}
\newcommand\GL{\mathrm{GL}}
\newcommand\PGL{\mathrm{PGL}}
\newcommand\SL{\mathrm{SL}}
\newcommand\UU{\mathrm{U}}

\newtheorem{theorem}{Theorem}[section]

\newtheorem{corollary}[theorem]{Corollary}
\theoremstyle{definition}
\newtheorem{definition}[theorem]{Definition}
\theoremstyle{remark}
\newtheorem{remark}[theorem]{Remark}
\newtheorem{example}[theorem]{Example}

\title{Representation variety for the rank one affine group}

\author[A. Gonz\'alez-Prieto, M. Logares and V. Mu\~noz]{\'Angel Gonz\'alez-Prieto, Marina Logares and Vicente Mu\~noz}

\address{ETSI Sistemas Inform\'aticos, Universidad Polit\'ecnica de Madrid, Calle Alan Turing s/n, 28031 Madrid, Spain}
\email{angel.gonzalez.prieto@upm.es}
\address{Facultad de Ciencias Matem\'aticas, Universidad Complutense de Madrid, Plaza Ciencias 3, 28040 Madrid Spain.}\email{mlogares@ucm.es}
\address{Departamento de \'Algebra, Geometr\'ia y Topolog\'ia, Facultad de Ciencias, Universidad de M\'alaga, Campus de Teatinos s/n, 29071 Málaga, Spain}\email{vicente.munoz@uma.es}

%\makeindex             % used for the subject index
                       % please use the style svind.ist with
                       % your makeindex program

%%%%%%%%%%%%%%%%%%%%%%%%%%%%%%%%%%%%%%%%%%%%%%%%%%%%%%%%%%%%%%%%%%%%%%%%%%%%%%%%%%%%%%%%%

\begin{document}

\let\thefootnote\relax\footnotetext{\noindent \emph{2010 Mathematics Subject Classification}. Primary:
 57R56. % TQFT
 Secondary:
 14C30, %Hodge theory
 14D07, % Variations of Hodge structures
 14D21. % Applications of moduli in mathematical physics
 
\emph{Key words and phrases}: TQFT, moduli spaces, $E$-polynomial, representation varieties.}
%
%\title*{Representation variety for the rank one affine group}
%\titlerunning{Representation variety for the rank one affine group}
%
%\author{\'Angel Gonz\'alez-Prieto, Marina Logares and Vicente Mu\~noz}
%\authorrunning{A. Gonz\'alez-Prieto, M. Logares and V. Mu\~noz}
%\institute{A. Gonz\'alez-Prieto \at ETSI Sistemas Inform\'aticos, 
%Universidad Polit\'ecnica de Madrid, Spain \\ \email{angel.gonzalez.prieto@upm.es} \\
%M. Logares \at Facultad de Ciencias Matem\'aticas, 
%Universidad Complutense de Madrid, Spain \\ \email{mlogares@ucm.es} \\
%V. Mu\~noz \at Departamento \'Algebra, Geometr\'ia y Topolog\'ia, Facultad Ciencias, 
%Universidad de M\'alaga, Spain \\ \email{vicente.munoz@uma.es}}

\maketitle

\begin{abstract}
The aim of this paper is to study the virtual classes of representation varieties of surface groups onto the rank one affine group. We perform this calculation by three different approaches: the geometric method, based on stratifying the representation variety into simpler pieces; the arithmetic method, focused on counting their number of points over finite fields; and the quantum method, which performs the computation by means of a Topological Quantum Field Theory. We also discuss the corresponding moduli spaces of representations and character varieties, which turn out to be non-equivalent due to the non-reductiveness of the underlying group.
\end{abstract}

%%%%%%%%%%%%%%%%%%%%%%%%%%%%%%%%%%%%%%%%%%%%%%%%%%%%%%%%%%%%%%%%%%%%%%%%%%%%%%%%%%%%%%%%%%%
%%%%%%%%%%%%%%%%%%%%%%%%%%%%%%%%%%%%%%%%%%%%%%%%%%%%%%%%%%%%%%%%%%%%%%%%%%%%%%%%%%%%%%%%%%%

%%%%%%%%%%%%%%%%%%%%%%%%%%%%%%%%%%%%
\section{Introduction} \label{sec:1}
%%%%%%%%%%%%%%%%%%%%%%%%%%%%%%%%%%%%

Let $\G$ be a finitely presented group and $G$ a complex algebraic group. A representation of $\G$ into $G$ is a group homomorphism 
$\rho:\G\longrightarrow G$. We shall denote the set of representations by 
 $$
  \Rep{G}(\G)= \Hom(\G,G), %\{\rho:\G\rightarrow G \text{ homomorphism}\},
 $$
which is a complex algebraic variety. Let $X$ be a connected CW-complex with
$\pi_1(X)=\G$. Then $\Rep{G}(\G)$ parametrizes \emph{local systems} over $X$, that is, $G$-principal bundles $P \to X$ which
admit trivializations $P|_{U_\alpha} \simeq U_\alpha\x G$, for a covering $X=\bigcup U_\alpha$, such that the changes of charts
are (locally) constant functions $g_{\alpha\beta}:U_\alpha \cap U_\beta \to G$. A local system can also be understood as a covering space
with fiber $G$ (with the discrete topology). From another perspective, we can take a principal $G$-bundle $P \to X$ and fix a base point $x_0\in X$. Then a local system is equivalent to a \emph{flat connection} on $P$. Certainly, 
a flat connection $\nabla$ on $P$ determines the \emph{monodromy} representation $\rho_\nabla: \pi_1(X,x_0) \to \Aut(P_{x_0}) \cong G$,
given by associating to a path $[\gamma]\in \pi_1(X,x_0)$ the holonomy of $\nabla$ along $\gamma$. 
Finally, if $G$ admits a faithful representation $\kappa: G\hookrightarrow \GL_r(\CC)$, this can also be done with the vector bundle $E=P \x_{\kappa} \CC^r\to X$ with $G$ structure.

If we forget the trivialization at the base point, then we have the \emph{coset space} 
 \begin{equation}\label{eqn:coset}
 \Cos{G}(\G)=\Rep{G}(\G)/G ,
 \end{equation}
which is a topological space with the quotient topology. The action of $G$ changes the isomorphism
$ \Aut(P_{x_0}) \cong G$, which corresponds to the action of $G$ on $P$ as principal bundle. This induces the
adjoint action on the monodromy representation.
The space (\ref{eqn:coset}) parametrizes isomorphism classes of local systems. In this case we can
forget the base point, due to the isomorphisms $\pi_1(X,x_0)\cong \pi_1(X,x_1)$, for two points $x_0,x_1\in X$.
In general, the coset space is badly-behaved. It is not an algebraic variety, and it may be non-Hausdorff. 
From the algebro-geometric point of view, it is more natural to focus on the \emph{moduli space} of representations
$\cM_G(\G)$. This is defined as an algebraic variety with a ``quotient map'' $q:\Rep{G}(\G) \to \cM_G(\G)$ such that: (a)
$q$ is constant along orbits, that is $q$ is $G$-invariant; (b) it is an initial object for this property, that is any other
map $f:\Rep{G}(\G) \to Y$ which is $G$-invariant factors through $\cM_G(\G)$. It turns out that the moduli space
is defined by the \emph{GIT quotient}
 $$
 \cM_G(\G)=\Spec \cO(\Rep{G}(\G))^G\, ,
 $$
that is, its ring of functions is given by the $G$-invariant functions on the representation variety. 

In the case where $G$ is a complex reductive group (e.g.\ $G=\SL_r(\CC)$ or $\GL_r(\CC)$), the GIT quotient has
nice properties. Take a faithful representation $\kappa: G\hookrightarrow \GL_r(\CC)$.  The natural map
 \begin{equation}\label{eqn:111}
 \Cos{G}(\G) \to \cM_G(\G)
  \end{equation}
is a homeomorphism over the locus of irreducible representations (those that have no $G$-invariant 
proper subspaces $W\subset\CC^r$). 
If $\rho: \Gamma \to G \subset \GL_r(\CC)$ is reducible, then it has a (maximal) filtration $W_0=0\subsetneq W_1\subsetneq \ldots \subsetneq W_m=\CC^r$,
such that the induced representations $\rho_k$ on $W_k/W_{k-1}$, $k=1,\ldots,m$, are irreducible. 
We call $\Gr(\rho)=\rho_1\oplus \ldots \oplus \rho_m$ the semi-simplification of $\rho$ and we say that
$\rho,\rho'$ are S-equivalent if they have the same semi-simplification. With all this said, 
the fibers of (\ref{eqn:111}) are the S-equivalence classes \cite[Theorem 1.28]{LuMa}. 

On the other hand, fixed an element $\gamma\in \G$, we define the associated \emph{character} as the map $\chi_\gamma :\Rep{G}(\G) \longrightarrow \CC$ given by $\chi_\gamma(\rho) = \tr \rho(\gamma)$. 
This defines a $G$-invariant function. The \emph{character variety} is the algebraic space defined
by these functions,
$$
 \chi_{G}(\G)=\Spec \CC[\chi_\gamma \, | \, \gamma\in \G].
 $$
By the results of \cite{Lawton-Sikora:2017} and \cite[Chapter 1]{LuMa}, for $G = \SL_n(\CC), \mathrm{Sp}_{2n}(\CC)$ or $\mathrm{SO}_{2n+1}(\CC)$ this is isomorphic to $\cM_G(\G)$.

The main focus of this paper are the representation varieties for \emph{surface groups}.
Let $\Sigma_{g}$ be a compact orientable surface of genus $g$. Its fundamental group is 
 \begin{equation}\label{eqn:piSg}
  \G=\pi_{1}(\Sigma_{g})= \Big\langle a_1,b_1,\ldots, a_g,b_g \, \Big| \, \prod_{j=1}^g [a_j,b_j]=1 \Big\rangle.
  \end{equation}
The representation variety over the surface group $\pi_{1}(\Sigma_{g})$, 
denoted by $\Rep{G}(\Sigma_{g})$, parametrizes local systems over $\Sigma_g$.
For $G=\GL_r(\CC)$, the variety $\Rep{G}(\Sigma_{g})\sslash G$ is also known as the 
Betti moduli space in the context of non-abelian Hodge theory.
Let $K=\UU(r)$ be the maximal compact subgroup of $G=\GL_r(\CC)$.
The celebrated theorem by Narasimhan and Seshadri in \cite{NS} establishes that if we give $\Sigma_g$ a complex structure, 
then $\Rep{\UU(r)}^{ss}(\Sigma_{g})\slash \UU(r)$ is isomorphic to the moduli space of (polystable) 
holomorphic bundles of degree $0$ on $\Sigma_{g}$, where
$\Rep{\UU(r)}^{ss}(\Sigma_{g})$ are the semi-simple representations. 
The Narasimhan-Seshadri correspondence can be considered an extension to higher ranks of the classical Hodge theorem. 
A representation $\rho: \pi_{1}(\Sigma_{g}) \to \UU(1)$ can be regarded as a cohomology class $[\rho]\in H^{1}(\Sigma_{g},\CC)$. 
Indeed, the $\Rep{\UU(1)}(\Sigma_{1})$ is isomorphic to 
 $$
 \text{Hom}(\pi_{1}(\Sigma_{g})/[\pi_{1}(\Sigma_{g}),\pi_{1}(\Sigma_{g})],\UU(1)) \cong \text{Hom}(H_{1}(\Sigma_{g}),\CC)\cong H^{1}(\Sigma_{g},\CC),
 $$
because $\UU(1)$ is abelian. 
The classical Hodge theorem then says that there is a decomposition $\rho=\eta\oplus \omega$ where $\eta\in H^{0,1}(\Sigma_{g})$ and $\omega\in H^{1,0}(\Sigma_{g})$. Therefore $\eta$ provides us with a holomorphic line bundle, 
that is, an holomorphic object reflecting the algebraic structure of $\Sigma_{g}$.

In general, for a complex reductive group $G$, $\cM_{G}(\Sigma_{g}) = \Rep{G}(\Sigma_{g})\sslash G$ is a hyperk\"{a}hler manifold, that is a manifold, modelled on the quaternions, with three complex structures $I,\,J\,$ and $K$, where $I$ is the complex structure inherited from the complex structure of the group $G$, in the same fashion as shown in section \ref{subsec:charvar}, $J$ is the complex structure provided by the complex structure of $\Sigma_{g}$ as explained above, and $K$ is the product $JI$. Therefore by focusing on only one of the complex structures, three moduli spaces are obtained: the moduli space $\cM_{G}(\Sigma_{g})$ of representations of the fundamental group of $\Sigma_{g}$ into $G$ for complex structure $I$, also known as Betti moduli space; the moduli space of polystable $G$-Higgs bundles of degree $0$ on $\Sigma_{g}$ for complex structure $J$, called Dolbeault moduli space; and the moduli space of polystable flat bundles on $\Sigma_{g}$ with vanishing first Chern  class, known as the de Rham moduli space.
Moreover, the work of Corlette, Donaldson, Hitchin and Simpson (see \cite{Corlette:1988, Donaldson1, Hitchin, Simpson:1992,SimpsonI,SimpsonII}) proves that there are diffeomorphisms between the three moduli spaces: Betti, Dolbeault and de Rham.  These diffeomorphisms expand the Riemann-Hilbert correspondence and Narasimhan-Seshadri theorem into what is known as the \emph{non-abelian Hodge correspondence}.

The diffeomorphism between $\cM_{G}(\Sigma_{g})$ and the Dolbeault moduli space has been largely exploited to obtain information on the topology of the character variety since Hitchin's work in \cite{Hitchin}. Moreover, the rich interaction between string theory and the moduli space of $G$-Higgs bundles has driven the most recent research on character varieties. There exists a map, known as the Hitchin map, that shows the moduli space of Higgs bundles as a fibration over a vector space. This fibration was proved by Hausel and Thaddeus in \cite{hausel-thaddeus:2003} to be the first non-trivial example of mirror symmetry, following Strominger, Yau and Zaslow's definition in \cite{Strominger-Yau-Zaslow}. That is, for Langlands dual groups $G$ and $^L G$, the Hitchin map fibres over the same vector space in such a way that the fibres for the $G$-Higgs bundles moduli space are dual Calabi-Yau manifolds to the fibres of the Hitchin map for $^L G$-Higgs bundles moduli space. In order to prove so, Hausel and Thaddeus studied the Hodge numbers for these moduli spaces. Since our non-abelian Hodge correspondence is not an algebraic isomorphism it leads to one of the many motivations to  study the Hodge numbers for character varieties. 
We  introduce the Hodge numbers in Section \ref{subsec:Hodge}.

This discussion is at the heart of much recent research that justifies the study of the geometry of character varieties of surface groups,
in particular the Hodge numbers and $E$-polynomials (defined in Section \ref{subsec:Hodge}), since they are algebro-geometric invariants
associated to the complex structure. The first technique for this was the \emph{arithmetic method} inspired in the Weil conjectures.
Hausel and Rodr\'iguez-Villegas started the computation of the $E$-polynomials of $G$-character
varieties of surface groups for $G=\GL_n(\CC)$, $\SL_n(\CC)$ and $\PGL_n(\CC)$, using arithmetic methods. 
In \cite{Hausel-Rodriguez-Villegas:2008} they obtained the $E$-polynomials of the Betti moduli spaces %$\cM^{d}(G)$
for $G=\GL_n(\CC)$ %(i.e., for $C=e^{2\pi\imat d/n} \Id$), 
in terms of a simple generating function.
Following these methods, Mereb \cite{mereb} studied this case for $\SL_n(\CC)$, giving
an explicit formula for the $E$-polynomial in the case $G=\SL_2(\CC)$.
Recently, using this technique, explicit expressions of the $E$-polynomials have been computed \cite {Baraglia-Hekmati:2016}
for orientable surfaces with $G= \GL_3(\CC)$, $\SL_3(\CC)$ and for non-orientable surfaces with $G= \GL_2(\CC)$, $\SL_2(\CC)$.  

A \emph{geometric method} to compute $E$-polynomials of character varieties of surfaces groups was
initiated by Logares, Mu\~noz and Newstead in \cite{LMN}. In this method, the representation variety is chopped into 
simpler strata for which the $E$-polynomial can be computed. Following this idea, in the case $G=\SL_2(\CC)$ 
the $E$-polynomials were computed in a series of papers \cite{LMN,MM:2016,MM} and for
$G=\PGL_2(\CC)$ in \cite{Martinez:2017}. This method yields all the polynomials explicitly, and not in terms of generating functions.
Moreover it allows to keep track of interesting properties, like the Hodge-Tate condition (c.f.\ Remark \ref{rem:22}) of these spaces.

In the papers \cite{LMN, MM}, the authors show that a recursive pattern underlies the computations. The 
$E$-polynomial of the $\SL_2(\CC)$-representation variety of $\Sigma_g$ can be obtained from some data of the representation 
variety on the genus $g-1$ surface. The recursive nature of character varieties is widely present in the literature as in \cite{Diaconescu:2017,Hausel-Letellier-Villegas:2013}. It suggests that some type of recursion formalism, in the spirit of a Topological Quantum Field Theory (TQFT for short), must hold. This leads to the third computational method, the \emph{quantum method}, introduced
in \cite{GPLM-2017}, that formalizes this set up and provides a powerful machinery to compute $E$-polynomials of
character varieties. Moreover, this technique allows us to keep track of the classes in the Grothendieck ring of varieties (also known as virtual classes, as defined in section \ref{subsec:Groth}) of the representation varieties and had been successfully used in \cite{GP-2018a, GP-2019} in the parabolic context, in which we deal with punctured surfaces with prescribed monodromy around the puctures.

This paper applies the geometric, arithmetic and quantum methods to the group of affine transformation of the line,
$G=\Aff{1}{\CC}$. The representations of this group parametrize (flat) rank one affine bundles $L\to \Sigma_g$, so it is 
a relevant space per se. Moreover, despite of its simplicity, $G$ is not a reductive group, so the coincidence between the Betti moduli space and the character variety is not granted by \cite{Culler-Shalen}. Nonetheless, we will directly prove in section \ref{sec:moduli-character} that this isomorphism still holds. We shall see
how the three methods apply, performing explicit computations of their virtual classes. In this way, our main result is:
 \begin{theorem}\label{theorem:main}
Let $G=\Aff{1}{\CC}$ and $g\geq 1$. The virtual class for the representation variety $\Rep{\Aff{1}{\CC}}(\Sigma_g)$ is
\begin{eqnarray*}
&&[\Rep{\Aff{1}{\CC}}(\Sigma_{g})]= q^{2g-1} (q-1)^{2g} +q^{2g}-q^{2g-1} \, .
\end{eqnarray*}
\end{theorem}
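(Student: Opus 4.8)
The plan is to compute the virtual class by directly counting representations of the surface group relation into $G = \Aff{1}{\CC}$, exploiting the concrete matrix structure of the affine group. First I would recall that $\Aff{1}{\CC}$ is the group of matrices $\begin{pmatrix} a & b \\ 0 & 1 \end{pmatrix}$ with $a \in \CC^*$ and $b \in \CC$, so that the representation variety $\Rep{G}(\Sigma_g)$ is the subvariety of $G^{2g}$ cut out by the single relation $\prod_{j=1}^g [A_j, B_j] = \Id$, where $A_j, B_j$ are the images of the generators. The key simplification is that the commutator of two elements of $\Aff{1}{\CC}$ always has trivial linear part (its $a$-entry equals $1$), since the top-left entries multiply as in $\CC^*$ which is abelian. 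Thus each $[A_j, B_j]$ is a pure translation, and the product relation reduces to a single additive equation in $\CC$ once the linear parts $a_j, a_{j}'$ of $A_j, B_j$ are fixed.

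Next I would parametrize: write $A_j = \begin{pmatrix} a_j & c_j \\ 0 & 1 \end{pmatrix}$ and $B_j = \begin{pmatrix} a_j' & c_j' \\ 0 & 1 \end{pmatrix}$, and compute the translation part $t_j$ of each commutator $[A_j, B_j]$ explicitly as a $\CC$-linear expression in $c_j, c_j'$ with coefficients that are Laurent-polynomial functions of $a_j, a_j'$. The relation $\prod_j [A_j, B_j] = \Id$ then becomes the vanishing of $\sum_j (\text{weight}_j)\, t_j = 0$ for appropriate multiplicative weights coming from the composition of translations under pre-composition with linear parts. I would then stratify the variety according to whether \emph{all} the linear parts $a_j, a_j'$ equal $1$ (the purely-unipotent locus) or not. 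On the open locus where at least one $a_j \neq 1$ or $a_j' \neq 1$, the coefficient of the corresponding $c$-variable in the linear equation $\sum_j t_j = 0$ is a nonzero multiple of $(a_j - 1)$ or $(a_j' - 1)$, so that equation is a nontrivial affine-linear constraint on the $c$-variables and cuts the fiber dimension down by exactly one; here the count of the fiber over each choice of linear parts is $q^{2g-1}$, and the base (choices of linear parts away from the all-ones point) contributes $(q-1)^{2g} - 1$ weighted appropriately. On the closed unipotent locus (all $a_j = a_j' = 1$) every commutator is already trivial, so the relation is automatically satisfied and the fiber is all of $\CC^{2g}$, contributing $q^{2g}$.

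Assembling these strata in the Grothendieck ring gives $[\Rep{G}(\Sigma_g)] = \big((q-1)^{2g} - 1\big)\, q^{2g-1} + q^{2g}$, which I would simplify to $q^{2g-1}(q-1)^{2g} + q^{2g} - q^{2g-1}$, matching the claimed formula. The main obstacle I anticipate is the careful bookkeeping on the non-unipotent stratum: one must verify that the single scalar equation $\sum_j t_j = 0$ in the $c$-variables genuinely has a nonvanishing coefficient uniformly whenever the linear parts are not all $1$, so that the fiber is honestly an affine space of codimension one (rather than sometimes being empty or full), and that the resulting class is constant $= q^{2g-1}$ over that entire stratum. This requires checking that the coefficient vector of the linear form never vanishes identically on the locus where some $a_j \neq 1$; since the group is solvable and abelian on the diagonal, this reduces to a direct computation of the commutator translation parts, and I expect it to go through cleanly, but it is the step where an off-by-one in dimension or a missed degenerate sub-stratum could creep in.
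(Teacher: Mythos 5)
Your proposal is correct, and it starts from the same reduction as the paper's geometric proof in Section 3: each commutator in $\Aff{1}{\CC}$ is a pure translation, with translation part $(a-1)b'-(a'-1)b$, so the surface relation collapses to the single bilinear equation $\sum_{j}\big((a_j-1)c_j'-(a_j'-1)c_j\big)=0$ (note that no ``weights'' are actually needed in your step two: a product of pure translations is just the translation by the sum of their parts). Where you genuinely differ is in how the class of this variety is extracted. The paper rewrites it as $X_{2g}=\big\{\sum_i\alpha_i\beta_i=0\big\}\subset(\bk-\{-1\})^{2g}\times\bk^{2g}$ and runs an induction on the number of coordinates, stratifying by $\alpha_s\neq 0$ (solve for $\beta_s$) versus $\alpha_s=0$ (which gives $X_{s-1}\times\bk$), so that $[X_s]=(q-2)q^{s-1}(q-1)^{s-1}+q[X_{s-1}]$, and then sums the resulting series. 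You instead fiber the whole variety in one step over the torus $(\CC^*)^{2g}$ of linear parts: the fiber over the identity is $\CC^{2g}$, and over every other point the equation is a nonzero linear form in the $c$-variables, contributing $q^{2g-1}\big((q-1)^{2g}-1\big)$. The one point you should make explicit is why this second contribution is a product in $\K{\Var{\bk}}$: having all fibers of the same class is not by itself enough in the Grothendieck ring, so you should either note that these hyperplanes form the kernel of a surjective morphism of vector bundles over the stratum, hence a rank-$(2g-1)$ sub-vector bundle, which is Zariski-locally trivial, or stratify the base by the first index with $a_j\neq 1$ or $a_j'\neq 1$ and solve for that variable --- which is in effect what the paper's recursion does one coordinate at a time. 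Granting that standard point, your route is shorter (no induction) and is, in substance, the Grothendieck-ring version of the paper's own arithmetic method in Section 4, which counts $\FF_{q^n}$-points of the same variety by fibering over the space of $\alpha$-coordinates; there the paper needs an inclusion--exclusion over the hyperplanes $\alpha_i=-1$, which you avoid entirely by keeping the multiplicative coordinates $a_j\in\CC^*$ as your base.
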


\noindent {\bf Acknowledgments.} The authors want to thank Jesse Vogel for the very careful reading of this manuscript and for pointing out a mistake in the computation of section \ref{sec:moduli-character}, and to Sean Lawton for references. The third author is partially supported by Project MINECO (Spain) PGC2018-095448-B-I00.

%%%%%%%%%%%%%%%%%%%%%%%%%%%%%%%%%%%%
\section{General Background}\label{sec:2}
%%%%%%%%%%%%%%%%%%%%%%%%%%%%%%%%%%%%

%%%%%%%%%%%%%%%%%%%%%%%%%%%%%%%%%%%%
\subsection{Character varieties}\label{subsec:charvar}
%%%%%%%%%%%%%%%%%%%%%%%%%%%%%%%%%%%%

Let $\G$ be a finitely generated group and $G$ an algebraic group over a ground field $\bk$. 
A representation of $\G$ into $G$ is a group homomorphism 
$$
\rho:\G\longrightarrow G.
$$

We shall denote the set of representations $\Hom(\G,G)$, by $\Rep{G}(\G)$. Since $G$ is algebraic and $\G$ finitely presented, $\Rep{G}(\G)$ inherits the structure of an algebraic variety. Indeed, if we consider a presentation $\G=\langle\gamma_{1},\ldots, \gamma_{N}\,|\, R_{j}(\gamma_{1},\ldots, \gamma_{N})\rangle$ then the homomorphism
$$
\varphi:\Rep{G}(\G)\longrightarrow G^{N},\qquad \rho\mapsto (\rho(\gamma_{1}),\ldots,\rho(\gamma_{N})),
$$
describes an injection such that 
$$
\varphi(\Rep{G}(\G))= \big\{(g_{1},\ldots, g_{N})\in G^{N}\, \big|\, R_{j}(g_{1},\ldots, g_{N}) \big\},
$$
so that $\varphi(\Rep{G}(\G))$ is an affine algebraic variety.

The group $G$ itself acts on $\Rep{G}(\G)$ by conjugation, that is $g \cdot \rho(\gamma) = g\rho(\gamma)g^{-1}$ for any $g \in G$, $\rho \in \Rep{G}(\G)$ and $\gamma \in \G$. We are interested on the orbits by this action since two representations are isomorphic if and only if they lie in the same orbit. But parametrizing these orbits requires the use of a subtler technique known as Geometric Invariant Theory (GIT). Let us explain this in some detail.

\begin{example}\label{ex:1}
Consider the simplest case where $\G=\ZZ$ and let $G=\SL_{2}(\CC)$. Then $\Rep{\SL_2(\CC)}(\ZZ) = \SL_2(\CC)$. The quotient $\SL_{2}(\CC)/\SL_{2}(\CC)$ contains the following orbits: if $g\in \SL_{2}(\CC)$ has two different eigenvalues $\lambda,\,\lambda^{-1}$ then the orbit of $g$ is a closed one dimensional space, namely the collection of matrices of trace $\lambda + \lambda^{-1}$. But in the case $\lambda=\lambda^{-1}=\pm 1$ we get a non-closed one dimensional orbit and an orbit which consist of a point, which are respectively
$$\left[\left( \begin{matrix} \pm 1 & 1 \\ 0 & \pm 1 \end{matrix} \right)\right],\quad 
\left\{\left( \begin{matrix}\pm 1 & 0 \\ 0 & \pm 1 \end{matrix} \right)\right\}.
$$
Moreover, for all $t \neq 0$, we have that the matrices 
$$
\left( \begin{array}{cc} \pm 1 & t \\ 0 & \pm 1 \end{array} \right)\in
 \left[\left( \begin{array}{cc} \pm 1& 1 \\ 0 &\pm 1\end{array} \right)\right] ,
$$
but become the point orbit for $t=0$. Therefore $\SL_{2}(\CC)/\SL_{2}(\CC)$ is not an algebraic variety since its topology does not
satisfy the $T_1$ separation axiom. 
The GIT quotient  $\SL_{2}(\CC)\sslash \SL_{2}(\CC)$ solves this problem by collapsing the two $1$-dimensional open orbits with the 
two orbits consisting on just a point. In this way, $\SL_{2}(\CC)\sslash \SL_{2}(\CC) = \CC$.
\end{example}

In general, for any algebraic group $G$ acting on an affine variety $X$ over $\bk$, the action induces 
an action on the algebra of regular functions on $X$, $\cO(X)$. In this case, the affine GIT quotient is defined as the morphism 
$$
\varphi: X \longrightarrow X \sslash G :=\operatorname{Spec} \cO(X)^{G}
$$
of affine schemes associated to the inclusion $\varphi^{*}:\cO(X)^{G}\hookrightarrow \cO(X)$, where $ \cO(X)^G$ is the subalgebra of $G$-invariant functions.

%PUZZLED: A complex group, but the scheme is over $k$.
 
\begin{remark}  \label{rem:GITquo}
In general, the GIT quotient $X \sslash G$ is only an affine scheme since $\cO(X)^{G}$ might not be finitely generated (for an example of this phenomenon, see \cite{Nagata:1960}). However, a theorem of Nagata \cite{Nagata:1963} shows that, if $G$ is a reductive group (c.f.\ \cite[Chapter 3]{Newstead:1978}), then $\cO(X)^G \subseteq \cO(X)$ is finitely generated subalgebra and, thus, $X \sslash G$ is an affine variety. Many typical algebraic groups are reductive like $\GL_r(\CC), \SL_r(\CC)$ or $\CC^*$ with multiplication. However, an easy example of a non-reductive group is $\CC$ with the sum. 
\end{remark}

The key point of the GIT quotient is that it is a quotient from a categorical point of view. 
A \emph{categorical quotient} for $X$ is a $G$-invariant regular map of algebraic varieties $\varphi: X \to Y$ 
such that for any $G$-invariant regular map of varieties $f: X \to Z$, there exists a unique $\tilde{f}: Y \to Z$ such that the following diagram commutes
	\[
\begin{displaystyle}
   \xymatrix
   { X \ar[d]_{\pi} \ar[r]^f& Z \\
     Y \ar@{--{>}}[ru]_{\tilde{f}} &
      }
\end{displaystyle}   
\]
Using this universal property, it can be shown that if a categorical quotient exists, it is unique up to regular isomorphism. In this sense, it is straightforward (c.f.\ \cite[Corollary 3.5.1]{Newstead:1978}) to check that the GIT quotient (if it is a variety, see Remark \ref{rem:GITquo}) 
is a categorical quotient. Thus, it is uniquely determined by this universal property.

\begin{example}
In Example \ref{ex:1}, we have that the trace $\tr : \SL_2(\CC) \longrightarrow \CC$ 
is the only non-trivial $\SL_{2}(\CC)$-invariant function on $\SL_2(\CC)$. Therefore $\SL_{2}(\CC)\sslash\SL_{2}(\CC) = \Spec \CC[\tr] = \CC$. In general rank $r > 1$, we have that $\SL_{r}(\CC)\sslash\SL_{r}(\CC) = \CC^{r-1}$ with quotient map given by the coefficients of the characteristic polynomial.
\end{example}

Coming back to our case of study, we have an action of $G$ on $\Rep{G}(\G)$ by conjugation. The GIT quotient is called the \emph{moduli space of representations} and it is denoted as
$$
	\cM_G(\G)=\Rep{G}(\G) \sslash G.
$$
By construction, there is a natural continuous map from the coset space $\Cos{G}(\G)$, that 
parametrizes the isomorphisms classes of representations of $\G$ into $G$, to this space $\Cos{G}(\G)\to \cM_G(\G)$.

However, if the ground ring is $\bk = \CC$ (or, in general, algebraically closed), 
we may consider another natural way of parametrize isomorphism classes of representations. 
Suppose that $G$ is a linear algebraic group, so that $G<\GL_r(\CC)$. Given a representation $\rho:\G\to G$ we define its character as the map 
$$
\chi_{\rho}:\G\longrightarrow \CC,\quad  \gamma \mapsto\chi_{\rho}(\gamma) =\tr \rho(\gamma).
$$ 

Note that two isomorphic representations $\rho$ and $\rho'$ have the same character, whereas the converse 
is also true if $\rho$ and $\rho'$ are \emph{irreducible} (see \cite[Proposition 1.5.2]{Culler-Shalen}). A representation
is irreducible is it has no proper $G$-invariant subspaces of $\CC^r$, otherwise it is called \emph{reducible}.

If $\rho$ is reducible, let $\CC^k\subset \CC^r$ be a proper $G$-invariant subspace. Define $\rho_1:=\rho|_{\CC^k}$, which
is a representation on $\CC^k$. There is an induced representation $\rho_2$ in the quotient $\CC^{r-k}=\CC^r/\CC^k$. Then, we can write
 $$
 \rho=\begin{pmatrix} \rho_1 & M \\ 0 &\rho_2\end{pmatrix}.
 $$
Acting by conjugation by matrices $\begin{pmatrix} t\Id  & 0 \\ 0 &\Id \end{pmatrix}$, we see that 
 $\rho$ is equivalent to $\rho_t=\begin{pmatrix} \rho_1 & tM \\ 0 &\rho_2\end{pmatrix}$. When taking $t\to 0$, we have
 that $\rho$ is in the same GIT orbit than $\begin{pmatrix} \rho_1 & 0 \\ 0 &\rho_2\end{pmatrix}=\rho_1\oplus \rho_2$.
This is the same situation of
Example \ref{ex:1}. Repeating the argument with $\rho_2$, we have that any $\rho$ is equivalent to some
$\rho_1\oplus\ldots\oplus \rho_l$, where $\rho_j$ are irreducible. This is called a \emph{semi-simple representation}. We say that they are S-equivalent,
and denote $\rho \sim \rho_1\oplus\ldots\oplus \rho_l$. In this way, any point of the GIT-quotient is determined by a unique class
of semi-simple representation.

There is a character map 
$$
\chi:\Rep{G}(\G)\longrightarrow \CC^{\Gamma}, \quad \rho\mapsto \chi_{\rho}
$$ 
whose image $\chi_G(\G) = \chi(\Rep{G}(\G))$ is called the \emph{$G$-character variety} of $\G$. 
Moreover, by the results in \cite{Culler-Shalen} there exist a collection $\gamma_{1},\ldots, \gamma_{a}$ of elements of $\G$ such that $\chi_{\rho}$ is 
determined by $(\chi_{\rho}(\gamma_{1}), \ldots, \chi_{\rho}(\gamma_{a}))$, for any $\rho$. Such collection gives a map 
$$
\phi:\Rep{G}(\G)\longrightarrow \CC^{a}, \qquad \phi(\rho)=(\chi_{\rho}(\gamma_1),\ldots \chi_\rho(\gamma_{a})),
$$
and we have a bijection  $\chi_{G}(\G) \cong \phi (\Rep{G}(\G))$ which endows $\chi_{G}(\G)$ with the structure of an algebraic variety independent from the collection  $\gamma_{1},\ldots, \gamma_{a}$ chosen. 

The character map $\chi: \Rep{G}(\G) \to \chi_G(\G)$ is a regular $G$-invariant map so, since the GIT quotient is a categorical quotient, it induces a map
$$
	\tilde{\chi}: \cM_G(\G) \to \chi_G(\G).
$$
It is well-known that, when the group $G = \SL_n(\CC)$, this map is an isomorphism \cite{Culler-Shalen}. This is the
reason for the fact that sometimes the space $\cM_G(\G)$ is called the character variety. For different groups this isomorphism may still hold, as in this paper for $G=\Aff{1}{\bk}$, or may not hold as in \cite[Appendix A]{Florentino-Lawton:2012} for $G=\textrm{SO}_2$. For a general discussion about the relation of $\cM_G(\G)$ and $\chi_G(\G)$, see \cite{Lawton-Sikora:2017}.

%%%%%%%%%%%%%%%%%%%%%%%%%%%%%%%%%%%%
\subsection{Representation varieties of orientable surfaces} \label{subsec:repres}
%%%%%%%%%%%%%%%%%%%%%%%%%%%%%%%%%%%%

A very important class of representation varieties appears when consider representations of the fundamental group of a compact surface, the so-called surface groups. Let $\Sigma_{g}$ be a compact orientable surface of genus $g$. We take $\G = \pi_1(\Sigma_g)$ and we will focus on the representation variety $\Rep{G}(\pi_1(\Sigma_g))$, that we will shorten as $\Rep{G}(\Sigma_g)$. Using the presentation (\ref{eqn:piSg}) of $\pi_1(\Sigma_g)$, we get that
$$
\Rep{G}(\Sigma_{g})= \Big\{(A_{1},B_1, \ldots, A_{g},B_g)\in G^{2g}\,\Big|\, \prod_{j=1}^g [A_{j},B_j] \Big\}\subset G^{2g}.
$$

The associated moduli space of representations, $\cM_G(\Sigma_g) = \Rep{G}(\Sigma_{g})\sslash G$, plays a fundamental role in the so-called non-abelian Hodge correspondence in the case $G = \GL_r(\CC)$ (resp.\ $G = \SL_r(\CC)$). To be precise, consider a complex vector bundle
$$
	\pi: E \to \Sigma_g
$$ of rank $r$ and degree $0$ (resp.\ and trivial determinant line bundle) with a flat connection $\nabla$ on $E$. By flatness, there is no local holonomy for $\nabla$, so the holonomy map does not depend on the homotopy class of the loop, hence it descends to a map, called the \emph{monodromy}
  $$
   \rho_\nabla :\pi_1(\Sigma_g) \to G.
  $$
This is a representation in $\Rep{G}(\Sigma_g)$. The isomorphism class of the pair $(E, \nabla)$ is given by changing the basis of the fiber $E_{x_0} =\CC^r$ over the base point $x_0\in \Sigma_g$. This produces the action by conjugation of $G$ on $\Rep{G}(\Sigma_g)$.

In this way, the moduli of representations $\cM_G(\Sigma_g)=\Rep{G}(\Sigma_g)\sslash G$ parametrizes the moduli space of classes of pairs $(E, \nabla)$ of flat connections on a vector bundle (modulo S-equivalence). In this context, the former space is usually referred to as the Betti moduli space (it captures topological information of $\Sigma_g$), and the later space that is called the de Rham moduli space (it captures differentiable information of $\Sigma_g$).

%%%%%%%%%%%%%%%%%%%%%%%%%%%%%%%%%%%%
\subsection{Mixed Hodge structures} \label{subsec:Hodge}
%%%%%%%%%%%%%%%%%%%%%%%%%%%%%%%%%%%%%%%%%%%%%%%%%%%%%%%

In order to understand the geometry of representation varieties of surface groups, we will focus on an algebro-geometric 
invariant that is naturally present in the cohomology of complex varieties, the so-called \emph{Hodge structure}. 
For this reason, in this section, we will consider that the ground ring is $\CC$ 
and we will sketch briefly some remarkable properties of Hodge theory. For a more detailed introduction to Hodge theory, see \cite{Peters-Steenbrink:2008}.

A pure Hodge structure of weight $k$ consists of a finite dimensional rational vector space
$H$ whose complexification $H_\CC = H \otimes_\QQ \CC$ is equipped with a decomposition
$$
	H_\CC=\bigoplus\limits_{k=p+q} H^{p,q},
$$
such that $H^{q,p}=\overline{H^{p,q}}$, the bar meaning complex conjugation on $H$.
A Hodge structure of weight $k$ gives rise to the so-called Hodge filtration, which is a descending filtration
$F^{p}=\bigoplus\limits_{s\ge p}H^{s,k-s}$. From this filtration we can recover the pieces via the graded complex $\Gr^{p}_{F}(H):=F^{p}/ F^{p+1}=H^{p,k-p}$.

A mixed Hodge structure consists of a finite dimensional rational vector space $H$,
an ascending (weight) filtration $0 \subset \ldots \subset W_{k-1}\subset W_k \subset \ldots \subset H$ and a descending (Hodge) filtration $H_\CC \supset \ldots \supset F^{p-1}\supset F^p \supset \ldots \supset 0$ such that $F$ induces a pure Hodge structure
of weight $k$ on each $\Gr^{W}_{k}(H)=W_{k}/W_{k-1}$. We define the associated Hodge pieces as
 $$
 H^{p,q}:= \Gr^{p}_{F}\Gr^{W}_{p+q}(H)_\CC
 $$
and write $h^{p,q}$ for the {\em Hodge number} $h^{p,q} :=\dim_\CC H^{p,q}$. 
%We will say that a linear map $f: H \to H'$ preserves the mixed Hodge structures on $H$ and $H'$ if it preserves the corresponding weight and Hodge filtrations.

The importance of these mixed Hodge structures rises from the fact that the cohomology of complex algebraic varieties are naturally endowed with such structures, as proved by Deligne.

\begin{theorem}[Deligne \cite{DeligneI:1971,DeligneII:1971,DeligneIII:1971}]
Let $X$ be any quasi-projective complex algebraic variety (maybe non-smooth or non-compact). 
The rational cohomology groups $H^k(X)$ and the cohomology groups with compact support  
$H^k_c(X)$ are endowed with mixed Hodge structures. %Moreover, the induced maps by regular morphisms of
%complex varieties preserve these structures.
\end{theorem}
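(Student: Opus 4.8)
The plan is to follow Deligne's construction from \cite{DeligneII:1971,DeligneIII:1971} (see also \cite{Peters-Steenbrink:2008}), building the mixed Hodge structure in stages of increasing generality and reducing each stage to classical Hodge theory on smooth projective varieties. The base case is exactly that classical theory: if $X$ is smooth and projective, then by GAGA it is a compact K\"ahler manifold, and the Hodge theorem furnishes a decomposition $H^k(X)_\CC=\bigoplus_{p+q=k}H^{p,q}$ with $H^{q,p}=\overline{H^{p,q}}$. This is a \emph{pure} Hodge structure of weight $k$, with trivial weight filtration $W_{k-1}=0\subset W_k=H^k$, and it is functorial because pullback of differential forms preserves $(p,q)$-type.

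For smooth but non-compact $X$ I would first invoke Hironaka's resolution of singularities to pick a smooth projective compactification $\bar X\supset X$ whose boundary $D=\bar X\setminus X$ is a normal crossings divisor with smooth components $D_i$. The comparison theorem identifies $H^\bullet(X)_\CC$ with the hypercohomology of the logarithmic de Rham complex $\Omega^\bullet_{\bar X}(\log D)$, which carries two natural filtrations: the Hodge (stupid) filtration $F$ by $\Omega^{\geq p}$, and the increasing weight filtration $W$ recording the number of logarithmic poles. The heart of the argument is the residue computation identifying the associated graded $\Gr^W$ with the de Rham complexes of the disjoint unions of the $m$-fold intersections of the $D_i$ (each smooth projective), Tate-twisted. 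This produces a weight spectral sequence whose $E_1$-page consists of the pure Hodge structures of the base case, whose $d_1$ differentials are Gysin maps and hence morphisms of Hodge structures, and which degenerates at $E_2$; together with degeneration of the Hodge--de Rham spectral sequence at $E_1$, this endows $(H^k(X),W,F)$ with a mixed Hodge structure, with weights in $[k,2k]$.

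For an arbitrary (possibly singular) quasi-projective $X$, the plan is to replace $X$ by a smooth simplicial (or cubical) hyperresolution $X_\bullet\to X$, again produced by Hironaka's theorem, with each $X_n$ smooth and $H^\bullet(X)$ computed by the descent spectral sequence $E_1^{p,q}=H^q(X_p)\Rightarrow H^{p+q}(X)$. Each $H^q(X_p)$ already carries a mixed Hodge structure from the smooth case, the simplicial face maps are morphisms of such structures, and so the abutment inherits one. For cohomology with compact support I would proceed dually: for smooth $X$ of dimension $n$, Poincar\'e duality $H^k_c(X)\cong\big(H^{2n-k}(X)\big)^\vee(-n)$ transports the structure as a Tate twist of a dual, and the singular case follows from the dual hyperresolution and descent argument.

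The hard part, where essentially all the real content lives, is verifying that the two filtrations $W$ and $F$ genuinely \emph{cooperate} to give a mixed Hodge structure in the axiomatic sense, that is, that $F$ induces a pure Hodge structure of weight $k$ on every $\Gr^W_k$. This rests on showing that the relevant morphisms of filtered complexes are \emph{strict}, which is what forces the various spectral sequences to degenerate and keeps their limit terms inside the category of Hodge structures; the clean formulation is that mixed Hodge structures form an abelian category in which every morphism is strictly compatible with both filtrations. The same strictness, combined with functoriality, is also what guarantees that the resulting structure is independent of the auxiliary choices of compactification and hyperresolution, since any two choices can be dominated by a common refinement.
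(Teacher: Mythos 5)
The paper does not prove this statement at all: it is quoted as Deligne's theorem with a citation to \cite{DeligneI:1971,DeligneII:1971,DeligneIII:1971}, so there is no internal proof to compare against. Your outline is a faithful reconstruction of the actual proof in the cited sources (and of its textbook form in \cite{Peters-Steenbrink:2008}): classical Hodge theory on smooth projective varieties as the base case; for smooth open $X$, a normal crossings compactification, the logarithmic de Rham complex with its two filtrations, the residue isomorphism identifying $\Gr^W$ with (Tate twists of) the cohomology of the smooth projective intersection strata, degeneration of the weight spectral sequence at $E_2$ and of Hodge--de Rham at $E_1$; for singular $X$, smooth simplicial/cubical hyperresolutions and cohomological descent. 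You also correctly locate the real content in strictness and in the fact that mixed Hodge structures form an abelian category, which is what yields degeneration and independence of the auxiliary choices.

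One step, as stated, would fail: your treatment of $H^k_c$ in the singular case. Poincar\'e duality is only available when $X$ is smooth, and compactly supported cohomology is not contravariantly functorial for the face maps of a hypercovering, so there is no ``dual hyperresolution and descent argument'' in the naive sense. The standard repair (Deligne's, and the one in \cite{Peters-Steenbrink:2008}) is to choose a compactification $\bar X \supset X$ with boundary $Z=\bar X - X$, use the identification $H^k_c(X)\cong H^k(\bar X, Z)$, and endow this relative cohomology with a mixed Hodge structure via a hyperresolution of the pair $(\bar X,Z)$ (equivalently, via the cone of the restriction morphism of the associated mixed Hodge complexes); independence of the compactification again follows from strictness. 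With that substitution your sketch matches the canonical proof.
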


In this way, for any complex algebraic variety $X$, we define the {\em Hodge numbers} of $X$ by
 \begin{eqnarray*}
 h^{k,p,q}(X)&=&h^{p,q}(H^k(Z))=\dim \Gr^{p}_{F}\Gr^{W}_{p+q}H^{k}(X)_\CC ,\\
 h^{k,p,q}_{c}(X)&=&h^{p,q}(H_{c}^k(Z))=\dim \Gr^{p}_{F}\Gr^{W}_{p+q}H^{k}_{c}(X)_\CC.
 \end{eqnarray*}

The $E$-polynomial (also called Deligne-Hodge polynomial) is defined as 
 $$
 e(X)=e(X)(u,v):=\sum _{p,q,k} (-1)^{k}h^{k,p,q}_{c}(X) u^{p}v^{q}.
 $$

The key property of $E$-polynomials that permits their calculation is that they are additive for
stratifications of $X$. If $X$ is a complex algebraic variety and
$X=\bigsqcup\limits_{i=1}^{n}X_{i}$, where all $X_i$ are locally closed in $X$, then $e(X)=\sum\limits_{i=1}^{n}e(X_{i})$. Moreover, if $X = F \times B$, the K\"uneth isomorphism implies that $e(X) = e(F)e(B)$.

An easy consequence of these two properties is that, indeed, for an algebraic bundle (that is, locally trivial in the Zariski topology)
 $$
  F\longrightarrow X \stackrel{\pi}{\longrightarrow} B,
  $$
we have $e(X)=e(F)e(B)$. For this, just take a Zariski open subset $U\subset B$ so that $X|_U =\pi^{-1}(U) \cong U\x B$.
Then $B_1=B-U$ is closed and we can repeat the argument for $F\to X|_{B_1} \to B_1$. By the noethereanity,
we get a finite chain 
 $$ 
 B_{n+1}=\emptyset \subsetneq B_n \subsetneq \ldots \subsetneq B_1\subsetneq B=B_0,
 $$ 
where $U_k=B_{k-1}-B_k$ is Zariski open in $B_{k-1}$ and $X|_{U_k} \cong U_k\x B$. Then
 \begin{equation}\label{eqn:lll}
e(X)=\sum_{k} e(X|_{U_k}) = \sum_{k} e(F) e(U_k)=e(F) \sum_{k} e(U_k)= e(F) e(B).
 \end{equation}

\begin{example}
Recall that the cohomology of the complex projective space, $H^\bullet(\PP^n)$, is generated by the Fubini-Study 
form which is of type $(1,1)$, so we get $h_c^{2p,p,p}(\PP^n)=1$ for $0 \leq p \leq n$, and $0$ otherwise. Hence, its $E$-polynomial is $e(\PP^n) = 1 + uv + u^2v^2 + \ldots + u^nv^n$. In particular, since $\PP^1 = \CC \sqcup \left\{\infty\right\}$ we get that $e(\CC) = e(\PP^1) - 1 = uv$. In this way, we get that $e(\CC^n) = u^nv^n$, which is compatible with the usual decomposition $\PP^n = \star \sqcup \CC \sqcup \CC^2 \sqcup \ldots \sqcup \CC^n$.
\end{example}

\begin{remark} \label{rem:22}
When $h_c^{k,p,q}(X)=0$ for $p\neq q$, the polynomial $e(X)$ depends only on the product $uv$.
This will happen in all the cases that we shall investigate here. In this situation, it is
conventional to use the variable $q=uv$. If this happens, we say that the variety is of Hodge-Tate type (also known as balanced type). For instance, $e(\CC^{n})=q^{n}$ is Hodge-Tate. 
\end{remark}

%%%%%%%%%%%%%%%%%%%%%%%%%%%%%%%%%%%%
\subsection{Grothendieck ring of algebraic varieties} \label{subsec:Groth}
%%%%%%%%%%%%%%%%%%%%%%%%%%%%%%%%%%%%

Recall that from a (skeletally small) abelian category $\cA$, 
it is possible to construct an abelian group, known as the Grothendieck group of $\cA$. It is the abelian group $\K{\cA}$
generated by the isomorphism classes $[A]$ of objects $A\in \cA$, subject to the relations that whenever there exists a short exact sequence 
$
0\to B\to A\to C\to 0
$
we declare $[A]=[B]+[C]$.
Furthermore, if our abelian category is provided with a tensor product, i.e. $\cA$ is %symmetric 
monoidal, and the functors $-\otimes A:\cA\rightarrow \cA$ and $A\otimes -: \cA\rightarrow \cA$ are exact, then $\K{\cA}$ inherits a ring structure by $[A]\cdot[B]= [A\otimes B]$ (see \cite{Weibel}), under which it is called the \emph{Grothendieck ring} of $\cA$. The elements $[A] \in \K{\cA}$ are usually referred to as \emph{virtual classes}.

In our case, we are interested on the category of algebraic varieties with regular morphisms $\Var{\bk}$ 
over a base field $\bk$, which is not an abelian category. Nevertheless, we can still construct its Grothendieck group, $\K{\Var{\bk}}$, in an analogous manner, that is, as the abelian group generated by isomorphism classes of algebraic varieties with the relation that $[X]=[Y]+[U]$ if $X=Y\sqcup U$, with $Y\subset X$ a closed subvariety. Furthermore, the cartesian product of varieties also provides $\K{\Var{\bk}}$ with a ring structure. A very important element 
is the class of the affine line, $q = [\bk] \in \K{\Var{\bk}}$, the so-called \emph{Lefschetz motive}. 

\begin{remark} \label{rem:zerodivisor}
Despite the simplicity of its definition, the ring structure of $\K{\Var{\bk}}$ is widely unknown. In particular, for almost fifty years it was an open problem whether it is an integral domain. Indeed, the answer is no and, more strikingly, the Lefschetz motive $q$ is a zero divisor \cite{Borisov:2014}.
\end{remark}

Observe that, due to its additivity and multiplicativity properties, the $E$-polynomial defines a ring homomorphism
$$
	e: \K{\Var{\CC}} \to \ZZ[u^{\pm 1}, v^{\pm 1}].
$$
This homomorphism factorizes through mixed Hodge structures. To be precise, Deligne proved in \cite{DeligneI:1971} that the category of mixed Hodge structures ${\MHS{\QQ}}$ is an abelian category. Therefore we may as well consider its Grothendieck group, $\K{\MHS{\QQ}}$, which again inherits a ring structure. The long exact sequence in cohomology with compact support and the K\"unneth isomorphism shows that there exists ring homomorphisms $\K{\Var{\CC}} \to \K{\MHS{\QQ}}$ given by $[X] \mapsto [H_c^\bullet(X)]$, as well as $\K{\MHS{\QQ}} \to \ZZ[u^{\pm 1}, v^{\pm 1}]$ given by $[H] \mapsto \sum h^{p,q}(H) u^{p}v^{q}$ such that the following diagram commutes
	\[
\begin{displaystyle}
   \xymatrix
   {	\K{\Var{\CC}} \ar[rd]_{e} \ar[r] & \K{\MHS{\QQ}} \ar[d] \\
   & \ZZ[u^{\pm 1}, v^{\pm 1}]
      }
\end{displaystyle}   
\]

\begin{remark}\label{remark:$E$-pol-lefschetz}
From the previous diagram, we get that the $E$-polynomial of the affine line is $q=e([\CC])$ which justifies denoting by $q=[\CC]\in\K{\Var{\CC}}$ the Lefschetz motive. This implies that if the virtual class of a variety lies in the subring of $\K{\Var{\bk}}$ generated by the affine line, then the $E$-polynomial of the variety coincides with the virtual class, seeing $q$ as a variable. This will have deep implications, as we will explore in the arithmetic method in 
Section \ref{sec:4}.
\end{remark}

\begin{example}
As for $E$-polynomials, proceeding as in (\ref{eqn:lll}), we can show that if $F\to E \to B$ is an algebraic bundle, then $[E]=[F]\cdot [B]$ in $\K\Var{\bk}$. This enables multiple computations. For instance, consider the fibration $\CC \to \SL_{2}(\CC) \to \CC^{2}-\{(0,0)\}$, $f \mapsto f(1,0)$.
It is locally trivial in the Zariski topology, and therefore $[\SL_2(\CC)]=[\CC]\cdot[\CC^{2}-\{(0,0)\}] =q(q^2-1)=q^3-q.$

It is of interest to notice that one can compute $e(\PGL_{2}(\CC))=e(\SL_{2}(\CC))$, which is of no surprise since these groups are Langlands dual.
\end{example}

%%%%%%%%%%%%%%%%%%%%%%%%%%%%%%%%%%%%
\section{Geometric method} \label{sec:3}
%%%%%%%%%%%%%%%%%%%%%%%%%%%%%%%%%%%%

Using the previous machinery, let us show in a simple situation how to compute the virtual classes of representation varieties for surface groups. We will do this computation by three different approaches, the so called geometric, arithmetic and quantum method. The first geometric method, that we will follow in this section, is based on giving an explicit expression of the representation variety and chopping it into simpler pieces to ensemble the total virtual class. This is the method used in \cite{LMN,MM,MM:2016} to compute the $\SL_2(\CC)$-character varieties of surface groups. In Section \ref{sec:4}, 
we shall use the arithmetic methods of \cite{Hausel-Rodriguez-Villegas:2008}, 
based on counting the number of points of the representation variety over finite fields. 
Finally, in section \ref{sec:5} we shall use the machinery of the Topological Quantum Field Theories developed in \cite{GPLM-2017} 
to offer an alternative approach.

Let $\Sigma_g$ be the closed oriented surface of genus $g\geq 1$ as before. As target group we fix $G = \Aff{1}{\bk}$, the group of $\bk$-linear affine transformations of the affine line. Its elements are the matrices of the form
$
\begin{pmatrix}
	a & b\\
	0 & 1\\
\end{pmatrix}
$,
with $a \in \bk^* = \bk - \set{0}$ and $b \in \bk$. The group operation is given by matrix multiplication. In this way, $\Aff{1}{\bk}$ is isomorphic to the semidirect product $\bk^* \ltimes_\varphi \bk$ with the action $\varphi: \bk^* \times \bk \to \bk$, $\varphi(a,b)=ab$.

The representation variety is given by
$$
\Rep{\Aff{1}{\bk}}(\Sigma_{g})= \Big\{(A_{1},A_2, \ldots, A_{2g-1},A_{2g})\in \Aff{1}{\bk}^{2g}\,\Big|\, \prod_{i=1}^g [A_{2i-1},A_{2i}]= \I  \Big\}.
$$
Therefore, if we write
$$
A_i = \begin{pmatrix}
	a_i & b_i\\
	0 & 1\\
\end{pmatrix},
$$
then the product of commutators is given by 
 \begin{equation}\label{eqn:commutator}
	\prod_{i=1}^g \left[\begin{pmatrix}
	a_{2i-1} & b_{2i-1} \\
	0 & 1\\
\end{pmatrix},\begin{pmatrix}
	a_{2i} & b_{2i} \\
	0 & 1\\
\end{pmatrix}\right] = \begin{pmatrix}
	1 \, \, & {\displaystyle \sum_{i=1}^g (a_{2i-1}-1)b_{2i} - (a_{2i}-1)b_{2i-1} }\\
	0 \, \, & 1\\
\end{pmatrix}.
 \end{equation}

We can identify this variety with a more familiar space. Consider the auxiliary variety
 \begin{equation}\label{eqn:commutator2}
	X_{s} = \Big\{(\alpha_1, \ldots, \alpha_{s}, \beta_1, \ldots, \beta_{s}) \in (\bk-\{-1\})^s \times \bk^s\,\Big|\, \sum_{i=1}^{s} \alpha_i\beta_i = 0\Big\},
 \end{equation}
so that 
 $$
  \Rep{\Aff{1}{\bk}}(\Sigma_g) \cong X_{2g}
  $$ 
via the morphism $(a_{2i-1}, b_{2i-1}, a_{2i}, b_{2i}) \mapsto (a_{2i-1}-1, a_{2i}-1, \ldots,  b_{2i}, -b_{2i-1})$. Take $U = (\bk-\{-1\})^s - \left\{(0, \ldots, 0)\right\}$ and $V = U \times \bk^s$. We have that $X_s|_V$ is the pullback of the total space of the hyperplane bundle on $\PP^{s-1}$, $\cO_{\PP^{s-1}}(1)$, via the natural quotient map $\pi: U \subset \bk^s-\left\{0\right\} \to \PP^{s-1}$. That is, we have a pullback
	\[
\begin{displaystyle}
   \xymatrix
   {	X_s|_V = \pi^*\cO_{\PP^{s-1}}(1) \ar[d] \ar[r] & \cO_{\PP^{s-1}}(1) \ar[d] \\ U \ar[r]_\pi & \PP^{s-1}
      }
\end{displaystyle}   
\]
On the special fiber, $X_s|_{\{(0, \ldots, 0)\} \times \bk^s} = \bk^s$, which corresponds to the natural completion of the total space of the hyperplane bundle to the origin.
 
 %%%%%%%%%%%%%%%%%%%%%%%%%%%%%%%%%%
\subsection{Stratification analysis and computation of virtual classes}
 %%%%%%%%%%%%%%%%%%%%%%%%%%%%%%%%%%
  
Using this explicit description, we can compute the virtual class of the representation variety in a geometric way, by chopping the variety into simpler pieces, as shown in the following result. 
 
 \begin{theorem} \label{thm:44}
 The virtual class in the Grothendieck ring of algebraic varieties of the representation variety is
\begin{eqnarray*}
&&[\Rep{\Aff{1}{\bk}}(\Sigma_{g})]= q^{2g-1} \left((q-1)^{2g} + q-1\right) \, .
\end{eqnarray*}
\end{theorem}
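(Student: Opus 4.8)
The plan is to use the explicit identification $\Rep{\Aff{1}{\bk}}(\Sigma_g)\cong X_{2g}$ recalled above, together with the additivity and multiplicativity of the virtual class, stratifying $X_{s}$ (for $s=2g$) according to the vanishing of the vector $\alpha=(\alpha_1,\ldots,\alpha_s)$. Writing $\pi_\alpha\colon X_{s}\to(\bk-\{-1\})^s$ for the projection onto the $\alpha$-coordinates, I would split the base as $(\bk-\{-1\})^s = U\sqcup\{(0,\ldots,0)\}$, where $U=(\bk-\{-1\})^s-\{(0,\ldots,0)\}$ is the open locus on which at least one $\alpha_i$ is nonzero. Since the virtual class is additive over this locally closed decomposition, it suffices to compute the contributions $[\pi_\alpha^{-1}(U)]$ and $[\pi_\alpha^{-1}(0)]$ separately.

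Over the open stratum the defining relation $\sum_{i=1}^s\alpha_i\beta_i=0$ is, for each fixed $\alpha\in U$, a single nontrivial linear equation in $\beta=(\beta_1,\ldots,\beta_s)$, so its solution set is a hyperplane $\bk^{s-1}\subset\bk^s$. These hyperplanes assemble into a rank $s-1$ vector subbundle of the trivial bundle $U\times\bk^s$: on each of the open sets $\{\alpha_i\neq 0\}$ that cover $U$ one solves $\beta_i=-\alpha_i^{-1}\sum_{j\neq i}\alpha_j\beta_j$, giving an explicit Zariski-local trivialization with fibre coordinates $(\beta_j)_{j\neq i}$. Hence $\pi_\alpha^{-1}(U)\to U$ is an algebraic bundle with fibre $\bk^{s-1}$ and, by the multiplicativity recalled in Section~\ref{subsec:Groth}, $[\pi_\alpha^{-1}(U)]=q^{s-1}[U]$. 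It then remains to note that $[(\bk-\{-1\})^s]=(q-1)^s$, and that removing the single point $(0,\ldots,0)$ (which indeed lies in $(\bk-\{-1\})^s$, since $0\neq -1$) gives $[U]=(q-1)^s-1$.

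On the closed stratum $\alpha=(0,\ldots,0)$ the relation becomes vacuous, so the fibre $\pi_\alpha^{-1}(0)$ is the entire space $\bk^s$ of $\beta$-coordinates and contributes $q^s$; this is precisely the special fibre already isolated in the excerpt. Assembling the two pieces and setting $s=2g$ would yield
\[
[X_{2g}]=q^{2g-1}\big((q-1)^{2g}-1\big)+q^{2g}=q^{2g-1}\big((q-1)^{2g}+q-1\big),
\]
which is the desired formula.

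I expect the only genuine point requiring care to be the verification that $\pi_\alpha^{-1}(U)\to U$ is Zariski-locally trivial, so that the multiplicativity of virtual classes legitimately applies; the explicit trivializations over the charts $\{\alpha_i\neq 0\}$ settle this. A secondary bookkeeping subtlety is to treat the degenerate fibre over $\alpha=0$ as a genuinely separate stratum rather than folding it into the bundle, since there the rank of the linear constraint drops and the fibre jumps from $\bk^{s-1}$ to $\bk^s$. Keeping these two strata apart is exactly what produces the two summands $q^{2g-1}(q-1)^{2g}$ and $q^{2g}-q^{2g-1}$ in the final expression.
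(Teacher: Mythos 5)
Your proof is correct, and it reaches the formula by a genuinely different decomposition than the paper's. The paper's proof is recursive: it stratifies $X_s$ by the vanishing of the single coordinate $\alpha_s$, writing $X_s = \left(((\bk-\{-1\})\times\bk)^{s-1}\times(\bk-\set{0,-1})\right) \sqcup \left(X_{s-1}\times\bk\right)$, which yields the recursion $[X_s] = (q-2)q^{s-1}(q-1)^{s-1} + q[X_{s-1}]$ with base case $[X_1]=2q-2$, and then sums the resulting series. You instead stratify by the vanishing of the full vector $\alpha$: over $U=(\bk-\{-1\})^s-\{0\}$ the equation cuts out a rank $s-1$ vector subbundle of $U\times\bk^s$, Zariski-locally trivialized on the charts $\{\alpha_i\neq 0\}$, so multiplicativity of virtual classes for algebraic bundles --- exactly the property recorded in Section \ref{subsec:Groth} --- gives $q^{s-1}\left((q-1)^s-1\right)$ in one step, while the origin contributes $q^s$. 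Your route is in fact closer in spirit to the paper's arithmetic proof in Section \ref{subsec:affine}, which counts $\FF_{q^n}$-points of the same variety by fibering over $\alpha$, with hyperplane fibers away from $\alpha=0$ and the full space over $\alpha=0$; the one piece of extra content your motivic version requires is precisely the local-triviality check you flag (invisible when merely counting points, but necessary to multiply classes in $\K{\Var{\bk}}$), and your explicit trivializations settle it. The trade-off: the paper's peeling-off argument needs no bundle argument at all, since each stratum is literally a product, at the cost of an induction; yours is induction-free and explains directly why the answer splits as $q^{2g-1}(q-1)^{2g}$ plus $q^{2g}-q^{2g-1}$, matching the bundle-plus-special-fibre picture that the paper sketches just before Theorem \ref{thm:44} but does not actually use in its proof.
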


\begin{proof}
We stratify the varieties $X_s$ in the following manner:
\begin{align*}
	X_s &= \Big\{\beta_s = \frac{1}{\alpha_s}\sum\limits_{i=1}^{s-1} \alpha_i\beta_i, \alpha_s \neq 0 \Big\} \bigsqcup 
	\Big\{\sum_{i=1}^{s-1} \alpha_i\beta_i=0,	\alpha_s=0\Big\} \\
	&= \left(\left((\bk-\{-1\}) \times \bk\right)^{s-1} \times (\bk-\set{0,-1}) \right) \sqcup \left(X_{s-1} \times \bk\right).
\end{align*}
This gives rise to the recursive formula for the virtual classes
 $$
 [X_s] = (q-2)q^{s-1}(q-1)^{s-1} + q[X_{s-1}].
 $$
The base case is 
 $$
  X_1 = \{(\alpha, \beta) | \alpha \beta=0\} =
  \left\{\beta= \frac{1}{\alpha}, \alpha \neq 0, -1\right\} \sqcup \left\{(0, \beta) \right\} \\
	= (\bk-\set{0,-1}) \sqcup \bk,
	$$ 
which has $[X_1] = 2q-2$. 
The induction gives
\begin{align*}
 [X_s] &= \sum_{t=1}^{s-1} (q-2)q^{s-t}(q-1)^{s-t} q^{t-1} + q^{s-1}(2q-2) \\
 &=(q-2)q^{s-1}  \frac{(q-1)^{s}-(q-1)}{(q-1)-1}+ 2q^{s-1}(q-1) \\
 &= q^{s-1} \big((q-1)^{s}-(q-1)\big)+ 2q^{s-1}(q-1) \\
 &=q^{s-1} (q-1)^{s} + q^s-q^{s-1} . 
\end{align*}

The representation variety is $\Rep{\Aff{1}{\bk}}(\Sigma_{g})\cong X_{2g}$, hence the result.
\end{proof}

\begin{remark}
In the case that $\bk = \CC$, the same formula of Theorem \ref{thm:44} 
gives the $E$-polynomial of the representation variety by seeing $q$ as a formal variable.
\end{remark}

%%%%%%%%%%%%%%%%%%%%%%%%%%%%%%%
\subsection{The moduli space of the representations and the character variety}\label{sec:moduli-character}
%%%%%%%%%%%%%%%%%%%%%%%%%%%%%%%

In this section, we will deal with the moduli space of representations, that is, the GIT quotient
 $$
 \cM_{\Aff{1}{\bk}}(\Sigma_{g})= \Rep{\Aff{1}{\bk}}(\Sigma_{1})\sslash \Aff{1}{\bk}.
  $$

For that purpose, let us write down the action explicitly. Consider elements
$$
P = \begin{pmatrix}
	\lambda & \mu\\
	0 & 1\\
\end{pmatrix} \in \Aff{1}{\bk}, \quad
\rho =\left( \begin{pmatrix}
	a_{1} & b_{1} \\
	0 & 1\\
\end{pmatrix},\ldots,\begin{pmatrix}
	a_{2g} & b_{2g} \\
	0 & 1\\
\end{pmatrix}
\right) \in \Rep{\Aff{1}{\bk}}(\Sigma_g)
$$
then we have that
$$
P \rho P^{-1} =\left( \begin{pmatrix}
	a_{1} & \lambda b_{1}+\mu(a_1-1) \\
	0 & 1\\
\end{pmatrix},\ldots,\begin{pmatrix}
	a_{2g} & \lambda b_{2g} + \mu(a_{2g}-1) \\
	0 & 1\\
\end{pmatrix}
\right).
$$

\begin{remark}
This action can be also understood in terms of $X_{2g}$. In this coordinates, the action of
$(\lambda, \mu) \in \bk^* \ltimes_\varphi \bk = \Aff{1}{\bk}$ is given by
\begin{align*}
  (\lambda, \mu) & \cdot (\alpha_1, \ldots, \alpha_{2g}, \beta_1,\ldots, \beta_{2g}) =\\
  &= (\alpha_1, \ldots, \alpha_{2g}, \lambda\beta_1+\mu \alpha_2, \lambda\beta_2-\mu \alpha_1,
  \ldots, \lambda\beta_{2g-1}+\mu \alpha_{2g}, \lambda\beta_{2g}-\mu \alpha_{2g-1}).
 \end{align*}
 \end{remark}
 
 In particular, if we take $\mu = 0$ we have that the action is given by
$$
P \rho P^{-1} =\left( \begin{pmatrix}
	a_{1} & \lambda b_{1} \\
	0 & 1\\
\end{pmatrix},\ldots,\begin{pmatrix}
	a_{2g} & \lambda b_{2g} \\
	0 & 1\\
\end{pmatrix}
\right) \stackrel{\lambda \to 0}{\longrightarrow} \left( \begin{pmatrix}
	a_{1} & 0 \\
	0 & 1\\
\end{pmatrix},\ldots,\begin{pmatrix}
	a_{2g} & 0 \\
	0 & 1\\
\end{pmatrix}\right).
$$
Therefore, any representation is S-equivalent to a diagonal representation, which implies that
$$
	\cM_{\Aff{1}{\bk}}(\Sigma_{g})= \Rep{\Aff{1}{\bk}}(\Sigma_{1})\sslash \Aff{1}{\bk} = (\bk^*)^{2g},
$$
so we get that $[\cM_{\Aff{1}{\bk}}(\Sigma_{g})] = (q-1)^{2g}$.

On the other hand, we also have the character variety $\chi_{\Aff{1}{\bk}}(\Sigma_{g})$ generated by the characters of the representations, as described in Section \ref{subsec:charvar}. Observe that given
$$
	\rho = (\rho(\gamma_1), \ldots, \rho(\gamma_{2g})) = \left( \begin{pmatrix}
	a_{1} & b_{1} \\
	0 & 1\\
\end{pmatrix},\ldots,\begin{pmatrix}
	a_{2g} & b_{2g} \\
	0 & 1\\
\end{pmatrix}
\right) \in \Rep{\Aff{1}{\bk}}(\Sigma_g),
$$
where $\gamma_1, \ldots, \gamma_{2g}$ are the standard generators of $\pi_1(\Sigma_g)$, its character is determined by the tuple
$$
	(\rho(\gamma_1), \ldots, \rho(\gamma_{2g})) = (a_1+1, \ldots, a_{2g}+1) \in (\bk-\{1\})^{2g}.
$$
Reciprocally, any tuple of $(\bk-\{1\})^{2g}$ is the character of an $\Aff{1}{\bk}$-representation, namely, the diagonal one. Hence, we have that 
 $$
 \chi_{\Aff{1}{\bk}}(\Sigma_{g}) = \phi(\Rep{\Aff{1}{\bk}}) = (\bk-\{1\})^{2g}\, .
 $$ 

In particular, this shows that $[\chi_{\Aff{1}{\bk}}(\Sigma_{g})] = (q-1)^{2g}$. Observe that we indeed
have an isomorphism $\cM_{\Aff{1}{\bk}}(\Sigma_{g}) \cong \chi_{\Aff{1}{\bk}}(\Sigma_{g})$ given by
$(a_1, \ldots, a_{2g}) \mapsto (a_1 + 1, \ldots, a_{2g} + 1)$. Notice that this isomorphism is not directly provided
by \cite{Culler-Shalen}.

%%%%%%%%%%%%%%%%%%%%%%%%%%%%%%%%%%%%
\section{Arithmetic method}\label{sec:4}
%%%%%%%%%%%%%%%%%%%%%%%%%%%%%%%%%%%%

In this section, we explore a different approach to the computation of $E$-poly\-nomials with an arithmetic flavour. This approach was initiated with the works of Hausel and Rodr\'iguez-Villegas \cite{Hausel-Rodriguez-Villegas:2008}. The key idea is based on a theorem of Katz that, roughly speaking, states that if the number of points of a variety $X$ over the finite field of $q$ elements, is a polynomial in $q$, $P(q) = |X(\FF_q)|$, then the $E$-polynomial of $X(\CC)$ is also $P(q)$. Under this point of view, the computation of $E$-polynomials reduces to the arithmetic problem of counting points over finite fields.

%%%%%%%%%%%%%%%%%%%%%%%%%%%%%%%%%%%%
\subsection{Katz theorem and $E$-polynomials}\label{subsec:Katz}
%%%%%%%%%%%%%%%%%%%%%%%%%%%%%%%%%%%%

Let us explain the result proved in \cite[Appendix]{Hausel-Rodriguez-Villegas:2008}. Start with a scheme $X/\CC$ over $\CC$. 
Let $R$ be a subring of $\CC$ which is finitely generated as a $\ZZ$-algebra and let $\cX$ be a separated $R$-scheme
of finite type. We call $\cX$
a \emph{spreading out} of $X$ if it yields $X$ after extension of scalars from $R$ to $\CC$.

We say that $\cX$ is \emph{strongly polynomial count} if there exists a
polynomial $P_\cX(T) \in \CC [T]$ such that for any finite field $\FF_q$ and any ring homomorphism $\varphi:R \to \FF_q$,
the $\FF_q$-scheme $\cX^\varphi$ obtained from $\cX$ by base change satisfies that 
for every finite extension $\FF_{q^n}/\FF_q$, we have
 $$
  \#\cX^\varphi(\FF_{q^n} ) = P_\cX(q^n).
  $$
We say that a scheme $X/\CC$ is \emph{polynomial count} if it admits a spreading out $\cX$ which is strongly polynomial count.

The following theorem is due to Katz \cite[Appendix]{Hausel-Rodriguez-Villegas:2008}. It computes the 
$E$-polynomial of $X$ from the count of
points of a spreading $\cX$.

\begin{theorem}
 Assume that $X$ is polynomial count with counting polynomial $P_\cX(T) \in \CC[T]$. Then
  $$
  e(X)=P_\cX (q),
  $$
where $q=uv$.
\end{theorem}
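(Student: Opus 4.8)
The plan is to deduce the identity $e(X) = P_\cX(uv)$ from two classical inputs: the Grothendieck--Lefschetz trace formula over finite fields, which converts the point counts into traces of Frobenius on $\ell$-adic cohomology, and Deligne's theory of weights together with the comparison between $\ell$-adic and singular (Betti) cohomology, which translates that Frobenius data back into the mixed Hodge numbers defining $e(X)$.

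First I would fix a spreading out $\cX/R$ as in the hypothesis and, for each ring homomorphism $\varphi : R \to \FF_q$, apply the Grothendieck--Lefschetz trace formula to the reduction $\cX^\varphi$:
$$
\#\cX^\varphi(\FF_{q^n}) = \sum_{i} (-1)^i \tr\!\big(F^n \mid H^i_c(\cX^\varphi_{\overline{\FF_q}}, \QQ_\ell)\big),
$$
where $F$ is the geometric Frobenius. Writing $\{\beta_{i,j}\}$ for the eigenvalues of $F$ on $H^i_c$ and $P_\cX(T) = \sum_d a_d T^d$, the polynomial-count hypothesis becomes $\sum_{i,j} (-1)^i \beta_{i,j}^{\,n} = \sum_d a_d\, q^{dn}$ for every $n \geq 1$. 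Since the functions $n \mapsto \gamma^n$ attached to distinct algebraic numbers $\gamma$ are linearly independent, I can match both sides exponential by exponential: the net alternating multiplicity of any Frobenius eigenvalue different from a Tate number $q^d$ must vanish, while the net alternating multiplicity of $q^d$ equals $a_d$.

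The second half is to recognize this Frobenius bookkeeping as Hodge bookkeeping. By Deligne's purity (Weil~II) the eigenvalues of $F$ on $\Gr^W_m H^i_c$ are pure of weight $m$, i.e.\ of absolute value $q^{m/2}$, and the comparison theorems (Artin comparison together with smooth base change) identify the Frobenius weight filtration on $H^i_c(\cX^\varphi_{\overline{\FF_q}}, \QQ_\ell)$ with the weight filtration of the mixed Hodge structure on $H^i_c(X(\CC), \QQ)$. Crucially, an eigenvalue equal to the \emph{specific} Tate number $q^d$ (not merely of absolute value $q^d$) corresponds to a class of Hodge type $(d,d)$, since $q^d$ is the Frobenius eigenvalue of the Tate object $\QQ_\ell(-d)$ whose Hodge realization $\QQ(-d)$ is pure of type $(d,d)$. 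Because $e(X) = \sum_{k,p,q}(-1)^k h^{k,p,q}_c\, u^p v^q$ is itself an alternating sum of Hodge numbers, I can feed in the previous step at the virtual level: all contributions from non-Tate eigenvalues cancel, the surviving Tate contributions are concentrated in bidegrees $(d,d)$ with virtual multiplicity $a_d$, and therefore
$$
e(X) = \sum_d a_d\,(uv)^d = P_\cX(uv),
$$
which is the assertion with $q = uv$.

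The hard part will be this passage from arithmetic to Hodge data: the linear-independence argument only constrains \emph{virtual} (alternating) eigenvalue multiplicities, so I must ensure that the eigenvalue-to-Hodge-type dictionary is applied compatibly group by group, so that virtual cancellation of eigenvalues forces virtual cancellation of the corresponding Hodge numbers. This is exactly where Deligne's comparison theorems and his theory of weights carry the real content --- in particular the fact that being of Tate type $q^d$ pins the Hodge type to $(d,d)$ --- whereas the trace formula and the linear-independence step are essentially formal. A secondary technical point is to choose the spreading out and the prime of reduction so that the comparison isomorphisms and purity are available; one works over a dense open of $\Spec R$ and uses that the hypothesis holds for \emph{all} $\varphi$ and all $n$, but this is standard and does not affect the main line of argument.
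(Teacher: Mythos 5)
Your proposal addresses a statement that the paper itself does not prove: the theorem is quoted from Katz's appendix to \cite{Hausel-Rodriguez-Villegas:2008}, so the comparison has to be made with Katz's argument. Your first half --- spreading out, the Grothendieck--Lefschetz trace formula, and the linear independence of the functions $n \mapsto \gamma^n$ --- matches the opening of that argument and is fine. The genuine gap is the asserted dictionary ``Frobenius eigenvalue equal to $q^d$ $\Rightarrow$ Hodge type $(d,d)$''. Purity (Weil II) plus the compatibility of the $\ell$-adic and Hodge-theoretic weight filtrations only place such an eigenvalue in weight $2d$, i.e.\ they give $p+q=2d$; they say nothing about how the weight-$2d$ piece distributes among the types $(p,q)$ with $p+q=2d$. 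As a pointwise principle your dictionary is essentially the Tate conjecture, and it is false as stated: if $A$ is an abelian surface over a number field with a prime of supersingular reduction, then under the comparison isomorphism all Frobenius eigenvalues on $H^2$ of the reduction become equal to $q$ after a finite field extension, yet $h^{2,0}(A_\CC)=1\neq 0$. Your justification (``$q^d$ is the eigenvalue of $\QQ_\ell(-d)$, whose Hodge realization has type $(d,d)$'') runs the implication backwards: Tate objects have eigenvalue $q^d$, but having eigenvalue $q^d$ does not make a class Tate. What your weight-theoretic bookkeeping actually proves is only the one-variable specialization $e(X)(u,u)=P_\cX(u^2)$, not the two-variable identity $e(X)=P_\cX(uv)$; the vanishing of the off-diagonal virtual Hodge numbers is precisely the hard content of the theorem, and it cannot be extracted from $\ell$-adic eigenvalues alone.

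The inputs that fill this gap in Katz's proof are of a different nature from the ones you list. First, the eigenvalue identities at every specialization $\varphi:R\to\FF_q$ must be globalized, via Chebotarev density and Brauer--Nesbitt, into an identity of virtual semisimplified Galois representations $\sum_i (-1)^i [H^i_c(X_{\overline{K}},\QQ_p)] = \sum_d a_d\,[\QQ_p(-d)]$ in the Grothendieck group of representations of the absolute Galois group of the fraction field $K$ of $R$; your prime-by-prime matching never performs this step. Second, the passage from this virtual Galois-theoretic statement to a Hodge-theoretic one is accomplished not by Deligne's weight theory but by $p$-adic Hodge theory: the de Rham/Hodge--Tate comparison theorems (Fontaine, Faltings, Tsuji), applied compatibly with Deligne's mixed Hodge structure, show that the Hodge-filtration data of $H^i_c$ is computed functorially and additively from the $p$-adic Galois representation, and the invariants of $\QQ_p(-d)$ are concentrated in bidegree $(d,d)$. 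It is this mechanism, and not purity, that forces the off-diagonal cancellation. A minor further point: since $X$ is in general neither smooth nor proper, identifying $H^i_c(X(\CC),\QQ_\ell)$ with the compactly supported cohomology of the reductions requires Deligne's generic base change theorem after shrinking $\Spec R$, rather than smooth base change.
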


This is a powerful result that computes $E$-polynomials of varieties via arithmetic. For instance, it explains
easily the equality $e(X)=e(U)+e(Y)$, when $Y\subset X$ is a closed subset and $U=X-Y$ is the (open) complement.
Certainly, in this case 
 $$
 \#\cX^\varphi(\FF_{q^n} ) =\Big(\#\cY^\varphi(\FF_{q^n} ) \Big)+\Big(\#\cU^\varphi(\FF_{q^n} ) \Big),
 $$
for spreadings $\cX,\cY,\cU$ of $X,Y,Z$, respectively. Therefore $P_\cX(T)=P_\cY(T)+P_\cU(T)$, because they coincide
on a infinity of values $T=q^n$.
Note in particular that if $\cY,\cU$ are strongly polynomial count then $\cX$ is also strongly polynomial count.
This also implies that  the polynomial count only depends on the class in the Grothendieck ring.

The drawback of the arithmetic method is that it does not give information on the finer algebraic structure of the 
(mixed) Hodge polynomials, or the classes in the Grothendieck ring of varieties. For instance, 
the $E$-polynomial of an elliptic curve $X$ is $e(X) = 1 - u -v + uv$, which is not a polynomial in $q = uv$, and thus, $X$ cannot be polynomial count.

 \begin{corollary}
 Suppose that $X$ has class in the Grothendieck ring $[X]=P(q)$, where $P$ is a polynomial in the 
 Lefschetz motive $q=[\CC]$. Then $X$ is polynomial count with $e(X)=P(q)$, $q=uv$.
 \end{corollary}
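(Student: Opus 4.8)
The plan is to separate the two assertions. The value of the $E$-polynomial is immediate: since $e:\K{\Var{\CC}}\to\ZZ[u^{\pm1},v^{\pm1}]$ is a ring homomorphism with $e([\CC])=uv$, and $[X]=P([\CC])$ with $P\in\ZZ[T]$, we get $e(X)=P(e([\CC]))=P(uv)$, i.e.\ $e(X)=P(q)$ with $q=uv$. The substantive content is therefore the claim that $X$ is \emph{polynomial count}, together with the verification that its counting polynomial is exactly $P$; Katz's theorem then re-derives $e(X)=P(q)$ and closes the consistency loop.

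First I would record the building blocks. Write $P(T)=\sum_i a_i T^i$ with $a_i\in\ZZ$. Because $q^i=[\CC]^i=[\CC^i]$, each term corresponds to an affine space $\CC^i=\mathbb{A}^i_\CC$, which admits the obvious spreading $\mathbb{A}^i_\ZZ$ over $R=\ZZ\subset\CC$; for every $\FF_q$ and every $n$ one has $\#\mathbb{A}^i(\FF_{q^n})=(q^n)^i$, so $\CC^i$ is strongly polynomial count with counting polynomial $T^i$.

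Next I would transfer the identity $[X]=\sum_i a_i[\CC^i]$ from the Grothendieck ring to a statement about point counts. Moving the negative coefficients to the opposite side yields an equality of \emph{effective} classes
\begin{equation*}
\Big[\,X\sqcup\bigsqcup_{a_i<0}(\CC^i)^{\sqcup(-a_i)}\Big]=\Big[\,\bigsqcup_{a_i>0}(\CC^i)^{\sqcup a_i}\Big]
\end{equation*}
between honest complex varieties, witnessed by a \emph{finite} chain of scissor relations $[Z]=[Y]+[Z\setminus Y]$. I would then choose a finitely generated $\ZZ$-subalgebra $R\subset\CC$ over which $X$, all the affine spaces, and each closed/open decomposition in that chain are simultaneously defined, obtaining a spreading $\cX$ of $X$. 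For every ring homomorphism $\varphi:R\to\FF_q$ the base change preserves these decompositions, so additivity and multiplicativity of point counting give $\#\cX^\varphi(\FF_{q^n})=\sum_i a_i (q^n)^i=P(q^n)$ for all $n$. Hence $\cX$ is strongly polynomial count with counting polynomial $P$, so $X$ is polynomial count, and the Katz theorem yields $e(X)=P(q)$ with $q=uv$.

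The main obstacle is the spreading-out step: guaranteeing that the finitely many scissor decompositions valid over $\CC$ persist over a single finitely generated $R$ and specialize correctly under every $\varphi:R\to\FF_q$. This is the standard constructibility argument---enlarging and localizing $R$ a finite number of times so that each closed immersion remains a closed immersion and each open complement remains its complement after base change---and it is precisely the principle already invoked above, that being polynomial count with a prescribed counting polynomial depends only on the class in $\K{\Var{\CC}}$. Everything else reduces to the additivity and multiplicativity of $\#(-)(\FF_{q^n})$ together with the base cases for affine spaces.
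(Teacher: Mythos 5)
Your proof is correct and follows essentially the same route as the paper's: establish the base case of affine spaces $\CC^m$ via the spreading $\Spec \ZZ[x_1,\ldots,x_m]$, and reduce the general case to it using the fact that being polynomial count with a prescribed counting polynomial depends only on the class in $\K{\Var{\CC}}$. The only difference is one of detail, not of substance: you spell out that reduction (scissor relations, a common finitely generated base ring, additivity of point counts after base change), whereas the paper invokes it as already established by the discussion preceding the corollary, and you additionally note that $e(X)=P(uv)$ follows directly from $e$ being a ring homomorphism, which the paper records in its Remark on the Lefschetz motive.
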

 
 \begin{proof}
 As the statement only depends on the class in the Grothendieck ring, it is enough to prove it for $q^m$,
 that is $X=\CC^m$, for $m\geq 0$, where $P(T)=T^m$. The spreading for $X$ is given by $\cX=\Spec \ZZ[x_1,\ldots, x_m]$
 and $\cX^\varphi =\Spec \FF_q[x_1,\ldots, x_m]=\FF_{q}^m$. Therefore
 $\# \cX^\varphi(\FF_{q^n})=\# \FF_{q^n}^m=(q^n)^m=P(q^n)$. Hence $X$ is of polynomial
 count and its polynomial is $P(T)=T^m$. See also Remark \ref{remark:$E$-pol-lefschetz}.
 \end{proof}

In our situation, we start with an affine variety, which is of the form
 $$
  X=\Spec \frac{\CC[x_1,\ldots, x_N]}{I}\, ,
  $$
for some ideal $I=(p_1,\ldots, p_M)$, defined by
polynomials $p_1, \ldots, p_M \in \CC[x_1,\ldots, x_N]$. Take the coefficients of the polynomials, 
which are complex numbers, and let $R\subset \CC$ be the $\ZZ$-algebra generated by them. Then
$p_1,\ldots, p_M\in R[x_1,\ldots, x_N]$. A spreading of $X$ is given by
$$
 \cX=\Spec \frac{R[x_1,\ldots, x_N]}{(p_1,\ldots, p_M)}\, .
$$

A homomorphism $\varphi:R\to \FF_q$ defines polynomials $\bar p_j=\varphi(p_j)\in \FF_q[x_1,\ldots, x_N]$,
$j=1,\ldots,m$, and
$$
 \cX^\varphi=\Spec \frac{\FF_q[x_1,\ldots, x_N]}{(\bar p_1,\ldots, \bar p_M)}\, .
$$
This variety is 
 $$
  \cX^\varphi=V( \bar p_1,\ldots, \bar p_M) \subset \FF_q^N \, ,
  $$
and the $\FF_{q^n}$-points of $\cX^\varphi$ are the solutions over $\FF_{q^n}$ to the equations:
 $$
  \bar p_1(x_1,\ldots,x_N)= 0, \ldots,
  \bar p_M(x_1,\ldots,x_N)= 0.
  $$

%%%%%%%%%%%%%%%%%%%%%%%%%%%%%%%%%%%%
\subsection{Representation variety for the affine group} \label{subsec:affine}
%%%%%%%%%%%%%%%%%%%%%%%%%%%%%%%%%%%%

Let us take $G = \Aff{1}{\CC}$, the group of $\CC$-linear affine transformations of the complex line. 
As mentioned before, 
the character variety is $\Rep{\Aff{1}{\CC}}(\Sigma_g) \cong X_{2g}$, where
$$
	X_{s} = \Big\{(\alpha_1, \ldots, \alpha_{s}, \beta_1, \ldots, \beta_{s}) \in (\CC-\{-1\})^s \times \CC^s\,
	\Big|\, \sum_{i=1}^{s} \alpha_i\beta_i = 0\Big\}.
$$

The spreading of $X_s$ is given by taking the base-ring $R=\ZZ$ and the $\ZZ$-variety defined by
 $$
 \cX_{s} = \Spec \frac{\ZZ [\alpha_1, (\alpha_1+1)^{-1}, \ldots, \alpha_{s}, (\alpha_s+1)^{-1}, \beta_1, \ldots, \beta_{s}]}{\big(\sum_i \alpha_i\beta_i\big)}\, .
 $$
Take a prime $q$ and the quotient map $\varphi:\ZZ \to \ZZ_q=\FF_q$. 
This is followed by the embedding (scalar extension) $\FF_q\subset \FF_{q^n}$.
Hence
 $$
 \cX_s^\varphi(\FF_{q^n})= \Big\{(\alpha_1, \ldots, \alpha_{s}, \beta_1, \ldots, \beta_{s}) \in (\FF_{q^n}-\{-1\})^s \times \FF_{q^n}^s\,\Big|\, \sum_{i=1}^{s} \alpha_i\beta_i = 0\Big\},
$$
and we want to count the number of points.

\begin{theorem}
The variety $\cX_s$ is strongly polynomial count with polynomial $P_{\cX_s} (T)=T^{s-1}(T-1)^s+T^s-T^{s-1}$.
In particular, the $E$-polynomial of $\Rep{\Aff{1}{\CC}}(\Sigma_g) \cong X_{2g}$ 
is
$$
 e(\Rep{\Aff{1}{\CC}}(\Sigma_g))=q^{2g-1}(q-1)^{2g}+q^{2g}-q^{2g-1}\, .
 $$
 \end{theorem}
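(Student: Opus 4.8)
The plan is to count the $\FF_{q^n}$-points of $\cX_s^\varphi$ directly and to observe that the resulting count is a single fixed polynomial in the field size, independent of the prime $q$, of the extension degree $n$, and indeed of the characteristic. Writing $r = q^n$ for the cardinality of the field, a point of $\cX_s^\varphi(\FF_{q^n})$ is a tuple $(\alpha_1,\ldots,\alpha_s,\beta_1,\ldots,\beta_s)$ with each $\alpha_i \in \FF_r - \{-1\}$, each $\beta_i \in \FF_r$, subject to the single linear relation $\sum_{i=1}^s \alpha_i\beta_i = 0$. First I would fix the $\alpha$-vector and count the admissible $\beta$-vectors, since for fixed $\alpha$ the defining constraint is a \emph{linear} equation in $\beta$.

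The count then splits into two cases according to whether the linear form $\beta \mapsto \sum_i \alpha_i \beta_i$ vanishes identically. If $\alpha = (0,\ldots,0)$, the relation is automatically satisfied and every one of the $r^s$ choices of $\beta$ is admissible; there is exactly one such $\alpha$, and crucially the all-zero vector is always allowed because $0 \neq -1$ in every field. If $\alpha \neq (0,\ldots,0)$, the form is a nonzero linear functional, so its kernel is a hyperplane with exactly $r^{s-1}$ points, and the number of such $\alpha$ is $(r-1)^s - 1$. Summing the two contributions gives
$$ \#\cX_s^\varphi(\FF_{q^n}) = r^s + \big((r-1)^s - 1\big) r^{s-1} = r^{s-1}(r-1)^s + r^s - r^{s-1}, $$
which is precisely $P_{\cX_s}(r)$ evaluated at $r = q^n$. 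This establishes that $\cX_s$ is strongly polynomial count with counting polynomial $P_{\cX_s}(T) = T^{s-1}(T-1)^s + T^s - T^{s-1}$.

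The step needing the most care — though it is not really an obstacle — is verifying that this count is genuinely uniform across all homomorphisms $\varphi : \ZZ \to \FF_q$ and all characteristics, as the definition of strongly polynomial count demands. The only feature of the defining equations that could conceivably depend on the characteristic is the excluded value $-1$; but since $-1 \neq 0$ even in characteristic two, each coordinate $\alpha_i$ ranges over a set of exactly $r-1$ elements and the distinguished point $\alpha = 0$ is always counted once, so the tally above holds verbatim for every finite field. With strong polynomial count in hand, Katz's theorem yields $e(X_s) = P_{\cX_s}(q)$ with $q = uv$; specializing to $s = 2g$ and using the isomorphism $\Rep{\Aff{1}{\CC}}(\Sigma_g) \cong X_{2g}$ gives the asserted $E$-polynomial $q^{2g-1}(q-1)^{2g} + q^{2g} - q^{2g-1}$, in agreement with the geometric computation of Theorem \ref{thm:44}.
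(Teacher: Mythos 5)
Your proof is correct, and it reaches the counting polynomial by a cleaner route than the paper's. Both arguments rest on the same structural fact: projecting onto the $\alpha$-coordinates, the fiber over $\alpha$ is the hyperplane $\{\beta : \sum_i \alpha_i\beta_i = 0\}$ of size $r^{s-1}$ when $\alpha \neq 0$, and all of $\FF_r^s$ when $\alpha = 0$. The difference is how the constraint $\alpha_i \neq -1$ is handled. The paper first counts the unrestricted variety $L = \{\sum_i \alpha_i\beta_i = 0\} \subset \FF_{q^n}^{2s}$, then introduces the hyperplanes $H_i = \{\alpha_i = -1\}$ and removes their contribution by inclusion-exclusion, summing $(-1)^t\binom{s}{t}(q^n)^{2s-t-1}$ over $t$, after which the binomial identity reproduces the factor $(q^n-1)^s$. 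You instead restrict the base of the fibration from the outset: the admissible $\alpha$'s form the product set $(\FF_r - \{-1\})^s$, whose cardinality $(r-1)^s$ is immediate, and the only bookkeeping is whether the zero vector lies in it --- which it always does, since $0 \neq -1$ in every characteristic, the point you rightly single out. This collapses the paper's inclusion-exclusion into the single subtraction $(r-1)^s - 1$ and makes the uniformity of the count across all finite fields entirely transparent. The paper's formulation would have an advantage only if the excluded locus were not a coordinate product; here it is one, so your direct count is strictly simpler. The concluding step (Katz's theorem plus the identification $\Rep{\Aff{1}{\CC}}(\Sigma_g) \cong X_{2g}$) is identical in both arguments.
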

 
 \begin{proof} 
Let 
 $$
  L=\left\{(\alpha_1, \ldots, \alpha_{s}, \beta_1, \ldots, \beta_{s}) \in \FF_{q^n}^{2s}\, \Big|\, \sum \alpha_i\beta_i=0\right\}.
  $$ 
There is a map
 $$
 \varpi:L\to \FF_{q^n}^{s}, \quad \varpi(\alpha_1, \ldots, \alpha_{s}, \beta_1, \ldots, \beta_{s})=(\alpha_1, \ldots, \alpha_{s}).
 $$
This is surjective, and $\varpi^{-1}(\alpha)$ is a hyperplane of $(\FF_{q^n})^{s}$ for $\alpha \neq (0,\ldots,0)$,
and all the space for $\alpha_0 = (0,\ldots,0)$. Hence
 \begin{align*}
 \# L &= (\# \varpi^{-1}(\alpha)) \cdot (\#(\FF_{q^n})^{s}-1) + \#(\FF_{q^n})^{s} \\
 &= (q^n)^{s-1}((q^n)^s-1)+(q^n)^s \\
 &=(q^n)^{2s-1}+ (q^n)^s-(q^n)^{s-1}\, .
 \end{align*}
 
Now, define the hyperplanes for $i = 1, \ldots, s$
 $$
 \hat{H}_i=\{(\alpha_1, \ldots, \alpha_{s}) \in \FF_{q^n}^{s} \, |\, \alpha_i=-1\}, \quad H_i = \hat{H}_i \times \FF_{q^n}^s.
 $$  
We have to remove the contributions to $L$ of these hyperplanes. Observe that $H_{i_1}\cap \ldots \cap H_{i_t} \cap L =\varpi^{-1}(\hat{H}_{i_1}\cap \ldots \cap \hat{H}_{i_t})$, for $t\geq 1$, and
in this case all fibers of $\varpi$ are hyperplanes. Thus
 $$
 \# (H_{i_1}\cap \ldots \cap H_{i_t} \cap L)= (q^n)^{2s-t-1}\, .
$$
Hence, by the inclusion-exclusion argument,
 \begin{align*}
  \# \big( (\FF_{q^n})^{2s}-  (H_{1}\cup \ldots \cup H_{s})\big) \cap L &=
   \sum_{t=0}^s (-1)^t \binom{s}{t} (q^n)^{2s-t-1} +(q^n)^s-(q^n)^{s-1}\\ & =
    (q^n)^{s-1} (q^n-1)^s +(q^n)^s-(q^n)^{s-1}\, .
 \end{align*}

This means that $\cX_s$ is strongly polynomial count with polynomial
 $$
 P_{\cX_s}(T)=T^{s-1}(T-1)^s+T^s-T^{s-1}\, .
 $$
\end{proof}

%%%%%%%%%%%%%%%%%%%%%%%%%%%%%%%%%%%%
\subsection{Exhaustive polynomial count} \label{subsec:exhaust}
%%%%%%%%%%%%%%%%%%%%%%%%%%%%%%%%%%%%

 There is a more computational method for finding the $E$-polynomial. 
 Suppose that we know that the variety $X$ is polynomial count. This may happen if we
 know that $X$ is Hodge-Tate type (in the sense of Remark \ref{rem:22}) 
 or that its virtual class $[X] \in \K{\Var{\CC}}$
 lies in the subring generated by the Lefschetz motive. Let $N$ be
 a bound for the dimension of $X$;
 in the case of the representation variety $\Rep{\G}(G)$, we can take $N= s\dim G -1$, where
 $s$ is the number of generators of the group $\G$.
 Then $P_X(T)$ is a polynomial of $\deg P_X \leq N$. We can count the
number of solutions to the defining equations of the variety over $\ZZ_{q_i}$, 
for a collection of $N+1$ prime powers $q_1,\ldots, q_{N+1}$. This
will determine uniquely polynomial $P_X(T)$.
  
Let us see how we can implement this idea for computing
$e(\Rep{\Aff{1}{\CC}}(\Sigma_g))$ for arbitrary genus $g$. For this, we use the quantum method explained in Section \ref{sec:5} to gain some qualitative
information on the structure of the $E$-polynomial, and the arithmetic method to actually compute
the $E$-polynomial. This is a nice combination of two methods.

As shown in Section \ref{sec:5}, the quantum method tells us that all the information
of the $E$-polynomial is encoded in a finitely generated $\ZZ[q]$-module $W$ given in (\ref{eqn:W}) and
a endomorphism $\cZ(L)$ on $W$ given in (\ref{eqn:Z}). In our case, $\dim W=2$, so in a certain basis we can write
 $$
  \cZ(L)= \begin{pmatrix} A(q) & B(q) \\ C(q) & D(q) \end{pmatrix} ,
 $$
for some polynomials $A,B,C,D \in \ZZ[q]$. The formula in Remark \ref{rem:zerodivisor} and equation (\ref{eqn:M}) tells us that we can recover the $E$-polynomial as
  \begin{equation}\label{eqn:gg}
 e(\Rep{\Aff{1}{\CC}}(\Sigma_g))= \frac{1}{q^g(q-1)^g} \begin{pmatrix} 1 & 0 \end{pmatrix}
  \begin{pmatrix} A(q) & B(q) \\ C(q) & D(q) \end{pmatrix}^g  \begin{pmatrix} 1 \\ 0 \end{pmatrix}.
  \end{equation}

Observe that the upper-left entry of $\cZ(L)^g$, which computes $e(\Rep{\Aff{1}{\CC}}(\Sigma_g))$, only depends on the product $BC$ for all $g \geq 1$. Hence, without lost of generality, we can take $C(q) = 1$. Now, observe that the first powers of $\cZ(L)$ are given by
$$
\cZ(L)^2 = \left(\begin{array}{rr}
A^{2} + B & \,\, A B + B D \\
A + D & D^{2} + B
\end{array}\right), \quad
	\cZ(L)^3 = \begin{pmatrix}
A^{3} + 2 \, A B + B D & \, \, \star \\
\star & \, \, \star
\end{pmatrix}.
$$

This implies that $A, B$ and $D$ are completely determined by the three $E$-polynomials $e(\Rep{\Aff{1}{\CC}}(\Sigma_1))$, 
$e(\Rep{\Aff{1}{\CC}}(\Sigma_2))$ and $e(\Rep{\Aff{1}{\CC}}(\Sigma_3))$, namely
$$
	A(q) = q(q-1)e(\Rep{\Aff{1}{\CC}}(\Sigma_1)), \quad B(q) = q^2(q-1)^2e(\Rep{\Aff{1}{\CC}}(\Sigma_2)) - A^2, 
$$
$$
	D(q) = \frac{q^3(q-1)^3e(\Rep{\Aff{1}{\CC}}(\Sigma_3))-A^3}{B} -2A.
$$

Now, observe that $\Rep{\Aff{1}{\CC}}(\Sigma_g)$ is an affine subvariety of $\Aff{1}{\CC}^{4g}$ so it has dimension at most $4g-1$. Hence, $e(\Rep{\Aff{1}{\CC}}(\Sigma_g))$ is a polynomial of degree at most $4g-1$ and, thus, it is completely determined by its value at $4g$ points. Since $\Rep{\Aff{1}{\CC}}(\Sigma_g)$ is polynomial counting, we can compute the number of points of $\Rep{\Aff{1}{\FF_{q_i}}}(\Sigma_g)$ for $4g$ different prime powers $q_{1}, \ldots, q_{4g}$. For that purpose, we run a small counting script \cite{GPLM-Script-2020} and we obtain the results shown in Table \ref{table:count-points}.
%\begin{center}
\begin{table}[!h]
\begin{tabular}{|c|c|c|c|c|c|c|c|c|}
\hline
	$q_i$ & $2$ & $3$ & $4$ & $5$ & $7$ & $8$ & $9$ & $11$ \\
\hline
	$g = 1$ & 4 & 18 & 48 & 100 & - & - & - & - \\\hline
	$g = 2$ & 16 & 486 & 5376 & 32500 & 446586 & 1232896 & 2991816 & 13323310  \\ \hline
	$g = 3$ & 64 & 16038 & 749568 & 12812500 & 784248234 & 3855351808 & 15479813448 &  161052610510 \\\hline
\end{tabular} 
\begin{tabular}{|c|c|c|c|c|}
\hline
	$q_i$ & $13$ & $16$ & $17$ & $19$ \\
\hline
	$g = 3$  & 1108679412828 & 11943951728640 & 23821270295824 & 84217678403958  \\\hline
\end{tabular}\caption{\label{tab:table-name}Count of points of $\Rep{\Aff{1}{\FF_{q_i}}}(\Sigma_g)$ for small prime powers $q_i$ and genus $g$.}
\label{table:count-points}
\end{table}
% \end{center}

This implies that the corresponding $E$-polynomials are
\begin{align*}
 e(\Rep{\Aff{1}{\CC}}(\Sigma_1)) &= q^{3} - q^{2}, \\
 e(\Rep{\Aff{1}{\CC}}(\Sigma_2)) &= q^{7} - 4 q^{6} + 6 q^{5} - 3  q^{4}, \\
 e(\Rep{\Aff{1}{\CC}}(\Sigma_3)) &= q^{11} - 6  q^{10} + 15  q^{9} - 20  q^{8} + 15  q^{7} - 5  q^{6}.
\end{align*}
Therefore, we finally obtain that
$$
 \cZ(L) = \begin{pmatrix}
{\left(q - 1\right)}^{2} q^{3} & {\left(q - 1\right)}^{3} {\left(q - 2\right)}^{2} q^{6} \\
1 & {\left(q^{2} - 3 \, q + 3\right)} {\left(q - 1\right)} q^{3}
\end{pmatrix}.
$$
Plugging this matrix into equation (\ref{eqn:gg}), we recover the result of Theorem \ref{thm:44}.

\begin{remark}
The philosophy behind this method is that, with the qualitative information provided by the TQFT, the $E$-polynomial of the representation variety for arbitrary genus $g$ is completely determined by the result at small genus. And, moreover, this later value is determined by its number of points at finitely many genus and prime powers. %This is a very curious finite-infinity property.
\end{remark}

%%%%%%%%%%%%%%%%%%%%%%%%%%%%%%%%%%%%
\section{Quantum method} \label{sec:5}
%%%%%%%%%%%%%%%%%%%%%%%%%%%%%%%%%%%%

The last approach we will show for the problem of computing virtual classes of representation varieties is the so-called quantum method. 
The key idea of this method is to construct a geometric-categorical device, known as a Topological Quantum Field Theory (TQFT), and to use it for providing a precise method of computation.

%%%%%%%%%%%%%%%%%%%%%%%%%%%%%%%%%%%%
\subsection{Definition of Topological Quantum Field Theories}\label{sec:brief-tqfts}
%%%%%%%%%%%%%%%%%%%%%%%%%%%%%%%%%%%%

The origin of TQFTs dates back to the works of Witten \cite{Witten:1988} in which he showed that the Jones polynomial (a knot invariant) can be obtained through Chern-Simons theory, a well-known Quantum Field Theory. Aware of the importance of this discovery, Atiyah formulated in \cite{Atiyah:1988} a description of a TQFTs as a monoidal symmetric functor. This purely categorical definition is the one that we will review in this section. For a more detailed introduction, see \cite{Freed-Lurie:2010, Kock:2004}.

We will focus on \emph{symmetric monoidal categories} $(\cC, \otimes, I)$ which we recall that, by definition, are a category $\cC$ with a symmetric associative bifunctor $\otimes: \cC \times \cC \to \cC$ and a distinguished object $I \in \cC$ that acts as left and right unit for $\otimes$ (for further information, see \cite{Weibel}). A very important instance of a monoidal category is the category of $R$-modules and $R$-modules homomorphisms, $\Mod{R}$, for a given (commutative, unitary) ring $R$. The usual tensor product over $R$, $\otimes_R$, together with the ground ring $R \in \Mod{R}$ as a unit, defines a symmetric monoidal category $(\Mod{R}, \otimes_R, R)$.

In the same vein, a functor $\cF: (\cC, \otimes_{\cC}, I_\cC) \to (\cD, \otimes_{\cD}, I_\cD)$ is said to be \emph{symmetric monoidal} if it preserves the symmetric monoidal structure i.e.\ $\cF(I_\cC) = I_\cD$ and there is an isomorphism of functors
$$
	\Delta: \cF(-) \otimes_{\cD} \cF(-) \stackrel{\cong}{\Longrightarrow} \cF(- \otimes_{\cC} -).
$$
%Moreover, if $\cC$ and $\cD$ are symmetric, $\cF$ is said to be \emph{symmetric} if $\cF(B_{c,c'}) \circ \Delta_{c,c'} = \Delta_{c',c} \circ B_{\cF(c), \cF(c')}$.

For our purposes, we will focus on the category of bordisms. Let $n \geq 1$. We define the \emph{category of $n$-bordisms}, $\CBordp{n}$, as the symmetric monoidal category given by the following data.
\begin{itemize}
	\item Objects: The objects of $\CBordp{n}$ are $(n-1)$-dimensional closed manifold, including the empty set.
	\item Morphisms: Given objects $X_1$, $X_2$ of $\CBordp{n}$, a morphism $X_1 \to X_2$ is an equivalence class of bordisms $W: X_1 \to X_2$ i.e.\ of compact $n$-dimensional manifolds with $\partial W = X_1 \sqcup X_2$. Two bordisms $W, W'$ are equivalent if there exists a diffeomorphism $F: W \to W'$ fixing the boundaries $X_1$ and $X_2$.\\
For the composition, given $W: X_1 \to X_2$ and $W': X_2 \to X_3$, we define $W' \circ W = W \cup_{X_2} W': X_1 \to X_3$ where $W \cup_{X_2} W'$ is the gluing of bordisms along $X_2$.
\end{itemize}
We endow $\CBordp{n}$ with the bifunctor given by disjoint union $\sqcup$ of both objects and bordisms. This bifunctor, with the unit $\emptyset \in \CBordp{n}$, turns $\CBordp{n}$ into a symmetric monoidal category.

\begin{definition}
Let $R$ be a commutative ring with unit. An $n$-dimensional Topological Quantum Field Theory (shortened a TQFT) is a symmetric monoidal %symmetric 
functor
$$
	\cZ: \CBordp{n} \to \Mod{R}.
$$
\end{definition}

\begin{remark}
This definition slightly differs from others presented in the literature, specially in those oriented to physics, where the objects and bordisms of $\CBordp{n}$ are required to be equipped with an orientation (which plays an important role in many physical theories).
\end{remark}

The main application of TQFTs to algebraic topology comes from the following observation. Suppose that we are interested in an algebraic invariant that assigns to any closed $n$-dimensional manifold $W$ an element $\chi(W) \in R$, for a fixed ring $G$. In principle, $\chi$ might be very hard to compute and very handcrafted arguments are needed for performing explicit computations.

However, suppose that we are able to \emph{quantize} $\chi$. This means that we are able to construct a TQFT, $\cZ: \CBordp{n} \to \Mod{R}$ such that $\cZ(W)(1) = \chi(W)$ for any closed $n$-dimensional manifold. Note that the later formula makes sense since, as $W$ is a closed manifold, it can be seen as a bordism $W: \emptyset \to \emptyset$ and, since $\cZ$ is monoidal, $\cZ(W): \cZ(\emptyset) = R \to \cZ(\emptyset) = R$ is an $R$-module homomorphism and, thus, it is fully determined by the element $\cZ(W)(1) \in R$.

Such quantization gives rise to a new procedure for computing $\chi$ by decomposing $W$ into simpler pieces.
To illustrate the method, suppose that $n = 2$ and $W = \Sigma_g$ is the closed oriented surface of genus $g \geq 0$. We can decompose $\Sigma_g: \emptyset \to \emptyset$ as $\Sigma_g = D^\dag \circ L^g \circ D$, where $D: \emptyset \to S^1$ is the disc, $D^\dag: S^1 \to \emptyset$ is the opposite disc and $L: S^1 \to S^1$ is a twice holed torus, as shown in Figure \ref{img:decomp-sigmag}.

\begin{figure}[h]
	\begin{center}
	\includegraphics[scale=0.45]{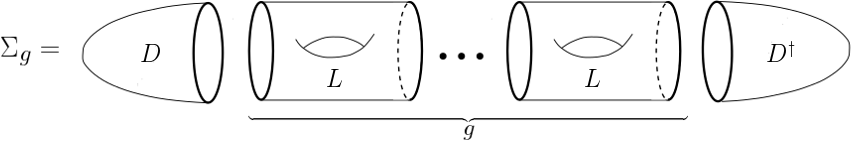}
	\caption{Decomposition of $\Sigma_g$ into simpler bordisms.}
	\label{img:decomp-sigmag}
	\end{center}
\end{figure}

In that case, applying $\cZ$ we get that
$$
	\chi(\Sigma_g) = \cZ(D^\dag) \circ \cZ(L)^g \circ \cZ(D)(1).
$$
That is, we can compute $\chi(\Sigma_g)$ for a surface of arbitrary genus just by computing three homomorphisms, $\cZ(D): R \to \cZ(S^1)$ (which is determined by an element of $\cZ(S^1)$), $\cZ(D^\dag): \cZ(S^1) \to R$ (which is essentially a projection) and an endomorphism $\cZ(L): \cZ(S^1) \to \cZ(S^1)$.

%%%%%%%%%%%%%%%%%%%%%%%%%%%%%%%%%%%%
\subsection{Quantization of the virtual classes of representation varieties}\label{sec:quantization-rep-var}
%%%%%%%%%%%%%%%%%%%%%%%%%%%%%%%%%%%%

The aim of this section is to quantize the virtual classes of representation varieties. However, as we will see, our construction will not give a TQFT on the nose, but a kind of lax version. 

The first ingredient we need to modify is the category of bordisms in order to include pairs of spaces. This might seem shocking at a first sight but it is very natural if we think that we are dealing with fundamental groups of topological spaces and the fundamental group is not a functor out of the category of topological spaces but out of the category of pointed topological spaces. The aim of this version for pairs is to track these basepoints.

Fix $n \geq 1$. We define the \emph{category of $n$-bordisms of pairs}, $\CBordpp{n}$ as the symmetric monoidal category given by the following data:
\begin{itemize}
	\item Objects: The objects of $\CBordpp{n}$ are pairs $(X, A)$ where $X$ is a $(n-1)$-dimensional closed manifold (maybe empty) together with a finite subset of points $A \subseteq X$ such that its intersection with each connected component of $X$ is non empty.
	\item Morphisms: Given objects $(X_1, A_1)$, $(X_2, A_2)$ of $\CBordpp{n}$, a morphism $(X_1, A_1) \to (X_2, A_2)$ is an equivalence class of pairs $(W, A)$ where $W: X_1 \to X_2$ is a bordism and $A \subseteq W$ is a finite set of points with $X_1 \cap A = A_1$ and $X_2 \cap A = A_2$. Two pairs $(W, A), (W',A')$ are equivalent if there exists a diffeomorphism of bordisms $F: W \to W'$ such that $F(A)=A'$.
Finally, given $(W, A): (X_1, A_1) \to (X_2, A_2)$ and $(W', A'): (X_2, A_2) \to (X_3, A_3)$, we define $(W',A') \circ (W,A)=(W \cup_{X_2} W', A \cup A'): (X_1, A_1) \to (X_3, A_3)$.
\end{itemize}

\begin{remark}
In this form, $\CBordpp{n}$ is not exactly a category since there is no unit morphism in $\Hom_{\CBordpp{n}} ((X,A),(X,A))$. This can be solved by weakening slightly the notion of bordism, allowing that $(X,A)$ itself could be seen as a bordism $(X,A): (X, A) \to (X,A)$.
\end{remark}

In order to construct the TQFT quantizing virtual classes of representation varieties, we need to introduce some notation. 
Fix a ground field $\bk$ (not necessarily algebraically closed) and $G$ an algebraic group over $\bk$ (not necessarily reductive).

Given a topological space $X$ and $A \subseteq X$ we denote by $\Pi(X,A)$ the fundamental groupoid of $X$ with basepoints in $A$, that is, the groupoid of homotopy classes of paths in $X$ between points in $A$.
If $X$ is compact and $A$ is finite, we define the \emph{$G$-representation variety} of the pair $(X,A)$, $\Rep{G}(X, A)$, as the set of groupoids homomorphisms $\Pi(X, A) \to G$ i.e.\ $\Rep{G}(X,A) = \Hom(\Pi(X,A), G)$. Observe that, in particular, if $A$ has a single point then $\Rep{G}(X,A)$ is the usual $G$-representation variety.

As it happened for representation varieties with a single basepoint, $\Rep{G}(X,A)$ has a natural structure of algebraic variety given as follows. Let $X = \bigsqcup\limits_{i=1}^r X_i$ be the decomposition of $X$ into connected components and let us order them so that $X_i \cap A \neq \emptyset$ for the first $s$ components. Pick $x_i \in X_i \cap A$ and, for any $i$, choose a path $\alpha_i^x$ between $x_i$ and any other $x \in X_i \cap A$, $x \neq x_i$. Then, a representation $\Pi(X, A) \to G$ is completely determined by the usual vertex representations $\pi_1(X, x_i) \to G$ for $1 \leq i \leq s$, 
together with an arbitrary element of $G$ for any chosen path $\alpha_i^x$. There are $|A|-s$ of such chosen paths, so we have a natural identification
$$
	\Rep{G}(X, A) = \prod_{i=1}^s \Rep{G}(X, x_i) \times G^{|A|-s}.
$$
The right hand side of this equality is naturally an algebraic variety, so $\Rep{G}(X, A)$ is endowed with the structure of an algebraic variety.

The second ingredient needed for quantizing representation varieties has a more algebraic nature. Given an algebraic variety $S$ over $\bk$, 
let us denote by $\Varrel{S}$ the category of algebraic varieties over $Z$, that is, the category whose objects are regular morphisms $Z \to S$ and its morphisms are regular maps $Z \to Z'$ preserving the base projections. As in the usual category of algebraic varieties, together with the disjoint union 
$\sqcup$ of algebraic varieties, and the fibered product $\times_S$ over $S$, 
we may consider its associated Grothendieck ring $\K{\Varrel{S}}$. The element of $\K{\Varrel{S}}$ induced by a morphism $h: Z \to S$ will be denoted as $[(Z, h)]_S \in \K{\Varrel{S}}$, or just by $[Z]_S$ or $[Z]$ when the morphism $h$ or the base variety are understood from the context. Recall that, in this notation, the unit of $\K{\Varrel{S}}$ is $\Unit{S} = [S, \Id_S]_S$ and that, if $S = \star$ is the singleton variety then $\K{\Varrel{\star}} = \K{\Var{\bk}}$ is the usual Grothendieck ring of varieties.

This construction exhibits some important functoriality properties that will be 
useful for our construction. Suppose that $f: S_1 \to S_2$ is a regular morphism. It induces a ring homomorphism $f^*\K{\Varrel{S_2}} \to \K{\Varrel{S_1}}$ given by $f^*[Z]_{S_2} = [Z \times_{S_2} S_1]_{S_1}$. In particular, taking the projection map $c: S \to \star$ we get a ring homomorphism $i^*: \K{\Var{\bk}} \to \K{\Varrel{S}}$ that endows the rings $\K{\Varrel{S}}$ with a natural structure of $\K{\Var{\bk}}$-module that corresponds to the cartesian product. Finally, we also have the covariant version $f_!: \K{\Varrel{S_1}} \to \K{\Varrel{S_2}}$ given by $f_![(Z, h)]_{S_1} = [(Z, f \circ h)]_{S_2}$. In general $f_!$ is not a ring homomorphism but the projection formula $f_!([Z_2] \times_{S_2} f^*[Z_1]) = f_![Z_2] \times_{S_1} [Z_1]$, for $[Z_1] \in \K{\Varrel{S_1}}$ and $[Z_2] \in \K{\Varrel{S_2}}$, implies that $f_!$ is a $\K{\Var{\bk}}$-module homomorphism. 

\begin{remark}\label{remark:properties-kvar-morph} Some important properties that clarifies the interplay between these two induced morphisms are listed below. They will be very useful for explicit computations in Section \ref{sec:application-quantum-method}. Their proof is a straightforward computation using fibered products and it can be checked in \cite{Hartshorne}. % [REF Otra mejor?].
	\begin{itemize}
		\item The induced morphisms are functorial, in the sense that $(g \circ f)^* = f^* \circ g^*$ and $(g \circ f)_! = g_! \circ f_!$. In particular, if $i: T \hookrightarrow S$ is an inclusion, then $i^*f^* = f|_{T}^*$.
		\item Suppose that we have a pullback of algebraic varieties (i.e.\ a fibered product diagram)
	\[
\begin{displaystyle}
   \xymatrix
   {	S' = S_1 \times_S S_2 \ar[d]_{f'} \ar[r]^{\hspace{1cm} g'} & S_1 \ar[d]^f \\ S_2 \ar[r]_g & S
      }
\end{displaystyle}   
\]
Then, it holds that $g^* \circ f_! = (f')_! \circ (g')^*$. This property is usually known as the base-change formula, or the Beck-Chevalley property, and it generalizes the projection formula.
		\item Suppose that we decompose $S = T \sqcup U$, where $i: T \hookrightarrow S$ is a closed embedding and $j: U \hookrightarrow S$ is an open subvariety. Then, we have that $i_!i^* + j_! j^*: \K{\Varrel{S}} \to \K{\Varrel{S}}$ is the identity map. This corresponds to the idea that virtual classes are compatible with chopping the space according to an stratification.
	\end{itemize}
\end{remark}

At this point, we are ready to define our TQFT. We take as ground ring $R = \K{\Var{\bk}}$ the Grothendieck ring of algebraic varieties. We define a functor $\cZ: \CBordpp{n} \to \Mod{\K{\Var{\bk}}}$ as follows.
\begin{itemize}
	\item On an object $(X, A) \in \CBordpp{n}$ we set $\cZ(X,A) = \K{\Varrel{\Rep{G}(X, A)}}$, the Grothendieck ring of algebraic varieties over $\Rep{G}(X, A)$.
	\item On a morphism $(W, A): (X_1, A_1) \to (X_2, A_2)$, let us denote the natural restrictions $i: \Rep{G}(W, A) \to \Rep{G}(X_1, A_1)$ and $j: \Rep{G}(W, A) \to \Rep{G}(X_2, A_2)$. Then, we set
	$$
		\cZ(W, A) = j_! \circ i^*: \K{\Varrel{\Rep{G}(X_1, A_1)}} \to \K{\Varrel{\Rep{G}(W, A)}} \to \K{\Varrel{\Rep{G}(X_2, A_2)}}.
	$$
\end{itemize}

\begin{remark}
Recall that, since in general $j_!$ is not a ring homomorphism, the induced map $\cZ(W,A): \K{\Varrel{\Rep{G}(X_1, A_1)}} \to \K{\Varrel{\Rep{G}(X_2, A_2)}}$ is only a $\K{\Var{\bk}}$-module homomorphism.
\end{remark}

It can be proven that, since the fundamental groupoid satisfies the Seifert-van Kampen theorem, $\cZ$ is actually a functor (see \cite{GPLM-2017, GP-2018a} for a detailed proof). However, it is not monoidal since, in general, for algebraic varieties $S_1, S_2$ we have $\K{\Varrel{S_1}} \otimes_{\K{\Var{\bk}}} \K{\Varrel{S_2}} \not\cong \K{\Varrel{S_1 \times S_2}}$. Nevertheless, we still have a map
$$
	\Delta_{S_1, S_2}: \K{\Varrel{S_1}} \otimes_{\K{\Var{\bk}}} \K{\Varrel{S_2}} \to \K{\Varrel{S_1 \times S_2}}
$$
given by `external product'. That is, it is the map induced by 
$$
[Z_1] \otimes [Z_2] \in \K{\Varrel{S_1}} \otimes_{\K{\Var{\bk}}} \K{\Varrel{S_2}} \mapsto \pi_1^*[Z_1] \times_{(S_1 \times S_2)} \pi_2^*[Z_2] \in \K{\Varrel{S_1 \times S_2}},
$$
where $\pi_i: S_1 \times S_2 \to S_i$ are the projections. In this situation, it is customary to say that $\cZ$ is a \emph{symmetric lax monoidal} functor.

Finally, in order to figure our what invariant is $\cZ$ computing, first observe that for the empty set we have $\Rep{G}(\emptyset) = \star$ is the singleton variety and, thus $\cZ(\emptyset) = \K{\Varrel{\Rep{G}(\emptyset)}} = \K{\Varrel{\star}} = \K{\Var{\bk}}$ is the usual Grothendieck ring of algebraic varieties. Now, let us take $(W, A)$ a closed connected $n$-dimensional manifold. Seen as a morphism $(W, A): \emptyset \to \emptyset$, it induces a $\K{\Var{\bk}}$-module homomorphism $\cZ(W, A) = c_!c^*: \K{\Var{\bk}} \to \K{\Var{\bk}}$, where $c: \Rep{G}(W, A) \to \star$ is projection onto a point. Therefore, we have that
 \begin{align*}
	\cZ(W, A)(\Unit{\star}) & = c_!c^*(\Unit{\star}) = c_! \Unit{\Rep{G}(W, A)} = \\
	&= c_![\Rep{G}(W, A)]_{\Rep{G}(W, A)} = [\Rep{G}(W, A) ]_\star = [\Rep{G}(W, A)],
 \end{align*}
where the second equality follows from the fact that $c^*$ is a ring homomorphism. Therefore, $\cZ$ quantizes the virtual classes of representation varieties so we have proven the following result.

\begin{theorem}
Let $\bk$ be a field, $G$ an algebraic group over $k$ and $n \geq 1$. There exists a symmetric lax monoidal Topological Quantum Field Theory
$$
	\cZ: \CBordpp{n} \to \Mod{\K{\Var{\bk}}},
$$
that quantizes the virtual classes of $G$-representation varieties.
\end{theorem}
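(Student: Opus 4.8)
The plan is to establish the four properties that upgrade the assignment $\cZ$ to a symmetric lax monoidal TQFT: that it is well defined on equivalence classes of bordisms, that it is functorial (respecting composition and identities), that the external product furnishes a lax monoidal structure, and that it computes the virtual classes. Since $\cZ$ has already been defined on objects and on morphisms, the work is to check these consistency conditions.

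First I would dispose of the easy points. If two bordisms of pairs $(W,A)$ and $(W',A')$ are related by a diffeomorphism $F$ with $F(A) = A'$ fixing the boundaries, then $F$ induces an isomorphism of fundamental groupoids $\Pi(W,A) \cong \Pi(W',A')$ intertwining the inclusions of the boundary groupoids; post-composing with $G$ yields an isomorphism $\Rep{G}(W,A) \cong \Rep{G}(W',A')$ commuting with both restriction maps, so $j_! i^* = j'_! (i')^*$ and $\cZ$ descends to equivalence classes. For the identity bordism $(X,A)\colon (X,A) \to (X,A)$ both restrictions are the identity, whence $\cZ(X,A) = \id_! \circ \id^* = \id$, settling the unit axiom.

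The core of the argument, and the step I expect to be the main obstacle, is functoriality under gluing. Given composable bordisms $(W,A)\colon (X_1,A_1)\to (X_2,A_2)$ and $(W',A')\colon (X_2,A_2)\to (X_3,A_3)$ with $W'' = W \cup_{X_2} W'$, I would first invoke the Seifert--van Kampen theorem for fundamental groupoids to identify $\Pi(W'', A\cup A')$ with the pushout of $\Pi(W,A)$ and $\Pi(W',A')$ over $\Pi(X_2,A_2)$. Applying the contravariant functor $\Hom(-,G)$ converts this pushout into a pullback, so the restriction maps exhibit an isomorphism of algebraic varieties
$$
\Rep{G}(W'', A\cup A') \cong \Rep{G}(W,A) \times_{\Rep{G}(X_2,A_2)} \Rep{G}(W',A').
$$
The delicate issues here are verifying that van Kampen applies for the chosen finite basepoint sets (each boundary component of $X_2$ meets $A_2$, so no component is lost on gluing) and that the resulting set-theoretic bijection is genuinely an isomorphism of varieties, which one checks against the explicit description $\Rep{G}(X,A) = \prod_{i} \Rep{G}(X,x_i)\times G^{|A|-s}$ recorded earlier.

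With the fibered-product description secured, the composition law is formal from the base-change (Beck--Chevalley) property of Remark \ref{remark:properties-kvar-morph}. Writing $i,j$ for the restrictions of $W$, $i',j'$ for those of $W'$, $I,J$ for those of $W''$, and $p,p'$ for the two projections of the fibered product, the common restriction to $X_2$ gives $j\circ p = i'\circ p'$, and base change yields $(i')^* j_! = p'_! p^*$. Hence
$$
\cZ(W')\circ\cZ(W) = j'_!(i')^* j_! i^* = j'_! p'_! p^* i^* = (j'p')_!(ip)^* = J_! I^* = \cZ(W\cup_{X_2}W').
$$
For the lax monoidal structure I would take the external-product maps $\Delta_{S_1,S_2}$ introduced above as the structure transformation, verify naturality and compatibility with the associativity and symmetry constraints (each reducing to a routine manipulation of fibered products), and note that $\cZ(\emptyset) = \K{\Varrel{\star}} = \K{\Var{\bk}}$ identifies the monoidal unit. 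Finally, the quantization claim is precisely the computation already displayed: for a closed connected $W$ viewed as $\emptyset \to \emptyset$ with structure map $c$ to a point, $\cZ(W)(\Unit{\star}) = c_! c^* \Unit{\star} = c_! \Unit{\Rep{G}(W,A)} = [\Rep{G}(W,A)]$, because $c^*$ is a unital ring homomorphism.
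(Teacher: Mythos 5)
Your proposal is correct and takes essentially the same route as the paper: the paper defines $\cZ$ identically, asserts functoriality via the Seifert--van Kampen theorem for fundamental groupoids (deferring the detailed proof to \cite{GPLM-2017, GP-2018a}), takes the external products $\Delta_{S_1,S_2}$ as the lax monoidal structure, and concludes with the same computation $\cZ(W,A)(\Unit{\star}) = c_!c^*\Unit{\star} = [\Rep{G}(W,A)]$. The only difference is that you spell out the gluing step --- the fibered-product description of $\Rep{G}(W\cup_{X_2}W', A\cup A')$ combined with the base-change identity $(i')^*j_! = p'_!p^*$ from Remark \ref{remark:properties-kvar-morph} --- which the paper delegates to the cited references, and your argument matches the one carried out there.
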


\begin{remark}\label{remark:tqft-computes-pairs}
To be precise, $\cZ$ computes virtual classes of $G$-representation varieties of pairs. This implies that it computes virtual classes of classical $G$-representation varieties up to a known constant. For instance, let $W$ be a compact connected $n$-dimensional manifold and let $A \subseteq W$ be a finite set. Then we have
$$
	\cZ(W, A)(\Unit{\star}) = [\Rep{G}(W, A)] = [\Rep{G}(W)] \times [G]^{|A|-1}.
$$
Hence, $\cZ(W, A)(\Unit{\star})$ computes $[\Rep{G}(W)]$ up to the factor $[G]^{|A|-1}$ (which is not a big problem since $[G]$ is known for most of the classical groups).
\end{remark}

Unravelling the previous construction, we can describe precisely the morphisms induced by the TQFT. Let us focus on the case $n=2$ and orientable surfaces. As we mentioned above, we need to understand the bordisms $D, D^\dag$ and $L$, as depicted in Figure \ref{img:non-parabolic-tubes}. Observe that, in order to meet the requirements of $\CBordpp{2}$, we need to chose a basepoint on $S^1$, that we will loosely denote by $\star \in S^1$. In this way $D: \emptyset \to (S^1, \star)$ and $D^\dag: (S^1, \star) \to \emptyset$ have a marked basepoint while $L: (S^1, \star) \to (S^1, \star)$ has two marked basepoints, one on each component of the boundary.

\begin{figure}[h]
	\begin{center}
	\includegraphics[scale=0.3]{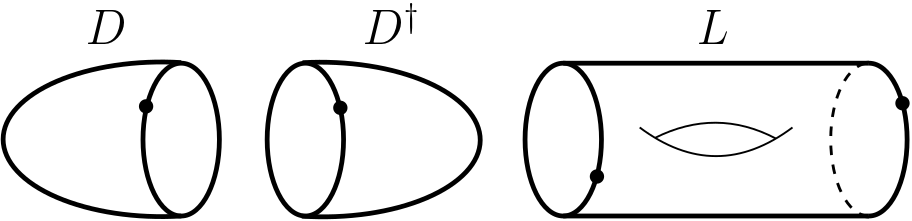}
	\caption{The basic bordisms for orientable surfaces.}
	\label{img:non-parabolic-tubes}
	\end{center}
\end{figure}

With respect to the object $(S^1, \star) \in \CBordpp{2}$, the associated representation variety is $\Rep{G}(S^1, \star) = \Hom(\ZZ, G) = G$. With respect to morphisms, the situation for $D$ and $D^\dag$ is very simple since they are simply connected. Therefore, the restriction maps at the level of fundamental groupoids are, respectively
$$
\star \longleftarrow \star \stackrel{i}{\longrightarrow} G,
	 \hspace{1cm} G \stackrel{i}{\longleftarrow} \star \longrightarrow \star,
$$
where $i: \star \hookrightarrow G$ is the inclusion of the trivial representation. Hence, under $\cZ$ we have that
$$
	\cZ(D) = i_!: \K{\Var{\bk}} \to \K{\Varrel{G}}, \hspace{1cm} \cZ(D^\dag) = i^*: \K{\Varrel{G}} \to \K{\Var{\bk}}.
$$

For the holed torus $L: (S^1, \star) \to (S^1, \star)$ the situation is a bit more complicated. Let $L = (T, A)$ where $A = \left\{x_1, x_2\right\}$ is the set of marked points of $L$, with $x_1$ in the in-going boundary and $x_2$ in the out-going boundary. Recall that $T$ is homotopically equivalent to a bouquet of three circles so its fundamental group is the free group with three generators. Thus, we can take $\gamma, \gamma_1, \gamma_2$ as the set of generators of $\pi_1(T, x_1)$ depicted in Figure \ref{img:paths-T} and $\alpha$ the path between $x_1$ and $x_2$.

\begin{figure}[h]
	\begin{center}
	\includegraphics[scale=0.15]{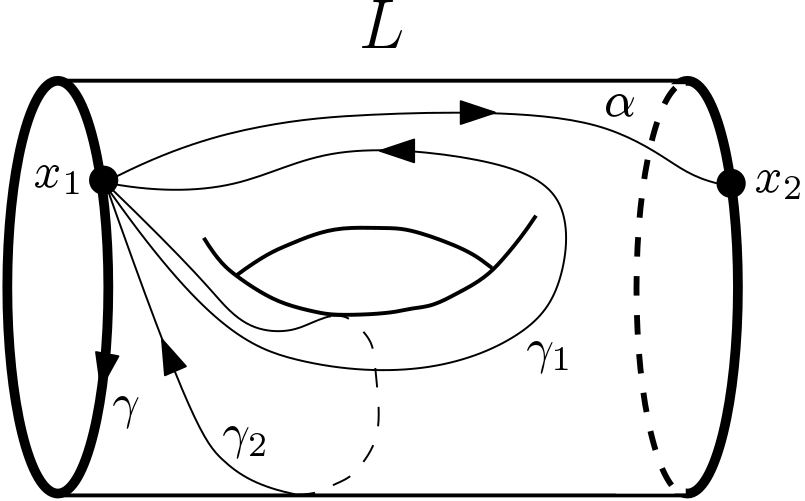}
	\caption{Chosen paths for $L$.}
	\label{img:paths-T}
	\end{center}
\end{figure}

With this description, $\gamma$ is a generator of $\pi_1(S^1, x_1)$ and $\alpha\gamma[\gamma_1, \gamma_2]\alpha^{-1}$ is a generator of $\pi_1(S^1, x_2)$, where $[\gamma_1, \gamma_2] = \gamma_1\gamma_2\gamma_1^{-1}\gamma_2^{-1}$ is the group commutator. Hence, since $\Rep{G}(L) =  \Hom(\Pi(T, A), G) = G^4$, we have that restriction maps at the level of fundamental groupoids are
$$
\begin{matrix}
	G & \stackrel{p}{\longleftarrow} & G^4 & \stackrel{q}{\longrightarrow} & G \\
	g & \mapsfrom & (g, g_1, g_2, h) & \mapsto & hg[g_1,g_2]h^{-1}
\end{matrix}
$$
where $g, g_1, g_2$ and $h$ are the images of $\gamma, \gamma_1, \gamma_2$ and $\alpha$, respectively. Hence, we obtain that
$$
	\cZ(L): \K{\Varrel{G}} \stackrel{p^*}{\longrightarrow} \K{\Varrel{G^4}} \stackrel{q_!}{\longrightarrow} \K{\Varrel{G}}.
$$

\begin{remark}\label{remark:formula-sigmag}
As we mentioned in Remark \ref{remark:tqft-computes-pairs}, the TQFT computes virtual classes of representation varieties of pairs. In particular, observe that if we decompose $\Sigma_g = D^\dag \circ L^g \circ D$, we are forced to put on $\Sigma_g$ a set of $g+1$ basepoints $A \subseteq \Sigma_g$. Hence, we have that
$$
	[\Rep{G}(\Sigma_g)] \times [G]^{g} = \cZ(\Sigma_g, A)(\Unit{\star}) = \cZ(D^\dag) \circ \cZ(L)^g \circ \cZ(D)(\Unit{\star}).
$$
Or equivalently, if we localize $\K{\Var{\bk}}$ by $[G] \in \K{\Var{\bk}}$ we have that
$$
	[\Rep{G}(\Sigma_g)] = \frac{1}{[G]^{g}} \cZ(D^\dag) \circ \cZ(L)^g \circ \cZ(D)(\Unit{\star}).
$$
\end{remark}

%%%%%%%%%%%%%%%%%%%%%%%%%%%%%%%%%%%%
\subsection{Representation varieties via the quantum method} \label{sec:application-quantum-method}
%%%%%%%%%%%%%%%%%%%%%%%%%%%%%%%%%%%%

In this section, as an application we will consider $G = \Aff{1}{\bk}$ and we will focus on $\Aff{1}{\bk}$-representation varieties. As in Sections 
\ref{sec:3} and \ref{sec:4}, we will compute the virtual classes of these representation varieties over any compact oriented surface but, in this case, we will use the TQFT described above for performing the computation.

As mentioned in Remark \ref{remark:formula-sigmag}, we only need to focus on the computation of the induced morphisms $\cZ(D), \cZ(D^\dag)$ and $\cZ(L)$. For the disc $\cZ(D) = i_!: \K{\Var{\bk}} \to \K{\Varrel{\Aff{1}{\bk}}}$ the situation is very simple since it is fully determined by the element $\cZ(D)(\Unit{\star}) = i_! \Unit{\star}$. Along this section, we will denote the unit of $\K{\Varrel{S}}$ by $\Unit{S}$, or just $\Unit{}$ is understood from the context. In particular $\Unit{\star} \in \K{\Var{\bk}} = \K{\Varrel{\star}}$ is the unit of the ground ring. 

In order to compute the morphism $\cZ(L): \K{\Varrel{\Aff{1}{\bk}}} \to \K{\Varrel{\Aff{1}{\bk}}}$, recall that, with the notation of Section \ref{sec:quantization-rep-var}, $\cZ(L) = q_!p^*$. We have a commutative diagram
	\[
\begin{displaystyle}
   \xymatrix
   {	& \Aff{1}{\bk}^3 \ar[r]^c \ar[d] \ar[ld]_{\varpi} & \star \ar[d]^{i} \\
   		\Aff{1}{\bk} & \Aff{1}{\bk}^4 \ar[r]_{p} \ar[l]^q & \Aff{1}{\bk}
      }
\end{displaystyle}   
\]
where $c$ is the projection onto a point, the leftmost vertical arrow is given by $(A_1, A_2, B) \mapsto (\I, A_1, A_2, B)$ and $\varpi(A_1, A_2, B) = B[A_1, A_2]B^{-1}$, being $\I \in \Aff{1}{\bk}$ the identity matrix. Moreover, the square is a pullback, so by Remark \ref{remark:properties-kvar-morph} we have
$$
	\cZ(L) \circ \cZ(D)(\Unit{\star}) = q_!p^*i_!\Unit{\star} = \varpi_! c^*\Unit{\star} = \varpi_! \Unit{\Aff{1}{\bk}^3}.
$$
In order to compute this later map, observe that, explicitly, the morphism $\varpi$ is given by
$$
	\varpi\left(\begin{pmatrix}
	a_1 & b_1\\
	0 & 1\\
\end{pmatrix}, \begin{pmatrix}
	a_2 & b_2\\
	0 & 1\\
\end{pmatrix}, \begin{pmatrix}
	x & y\\
	0 & 1\\
\end{pmatrix}\right) = \begin{pmatrix}
	1 & (a_1-1)b_2x - (a_2-1)b_1x\\
	0 & 1\\
\end{pmatrix}.
$$
Therefore, $\varpi$ is a projection onto $\ASO{1}{\bk} \subseteq \Aff{1}{\bk}$, the subgroup of orthogonal orientation-preserving affine transformations. Outside $\I \in \ASO{1}{\bk}$, $\varpi$ is a locally trivial fibration in the Zariski topology with fiber, for $\alpha \neq 0$, given by
\begin{align*}
F &= \left\{(a_1, a_2, x, b_1, b_2, y) \in (\bk^*)^3 \times \bk^3\,|\,(a_1-1)b_2x - (a_2-1)b_1x = \alpha\right\} \\
	& = \left\{b_2 = \frac{\alpha + (a_2-1)b_1x}{(a_1-1)x}, a_1 \neq 1\right\} \sqcup \left\{b_1 = -\frac{\alpha}{(a_2-1)x}, a_1 = 1\right\} \\
	& \cong \left(\left(\bk-\left\{0,1\right\}\right) \times (\bk^*)^2 \times \bk^2 \right) \sqcup \left(\bk-\set{0,1} \times \bk^* \times \bk^2 \right).
\end{align*}
Its virtual class is $[F] = (q-2)(q-1)^2q^2 + (q-2)(q-1)q^2 = q(q-1)(q^3-2q^2)$, where as always $q = [\bk] \in \K{\Var{\bk}}$.

On the other hand, on the identity matrix $\I$, the special fiber is
\begin{align*}
	\varpi^{-1}(\I) &= \left\{(a_1, a_2, x, b_1, b_2, y) \in (\bk^*)^3 \times \bk^3\,|\,(a_1-1)b_2 = (a_2-1)b_1\right\} \\
	& = \left\{b_2 = \frac{(a_2-1)b_1}{a_1-1}, a_1 \neq 1\right\} \sqcup \left\{a_1 = 1, a_2=1\right\} \sqcup \left\{a_1 = 1, a_2 \neq 1, b_1 = 0\right\} \\
	& \cong \left(\left(\bk-\left\{0,1\right\}\right) \times (\bk^*)^2 \times \bk^2 \right) \sqcup \left(\bk^* \times \bk^3\right) \sqcup \left(\bk-\set{0,1} \times \bk^* \times \bk^2 \right).
\end{align*}
Its virtual class is $[\varpi^{-1}(\I)] = (q-2)(q-1)^2q^2 + (q-1)q^3 + (q-2)(q-1)q^2 = q(q-1)(q^3-q^2)$.

Let us denote $\ASO{1}{\bk}^* = \ASO{1}{\bk} - \set{I}$ with inclusion $j: \ASO{1}{\bk}^* \hookrightarrow \Aff{1}{\bk}$. Then, by Remark \ref{remark:properties-kvar-morph}, we have that
\begin{align*}
	\varpi_!\Unit{} &= i_!i^*\varpi_! \Unit{} + j_!j^*\varpi_! \Unit{} =  i_!(\varpi|_{\varpi^{-1}(\I)})_! \Unit{} + j_!(\varpi|_{\varpi^{-1}(\ASO{1}{\bk}^*)})_! \Unit{}.
\end{align*}

For the first map, recall that $\varpi$ is locally trivial in the Zariski topology over $\ASO{1}{\bk}^*$. Thus, $(\varpi|_{\varpi^{-1}(\ASO{1}{\bk}^*)})_! \Unit{\Aff{1}{\bk}^3} = [F]\, \Unit{\ASO{1}{\bk}^*}$. On the other hand, the map $\varpi|_{\varpi^{-1}(\I)}$ is projection onto a point so $(\varpi|_{\varpi^{-1}(\I)})_! \Unit{\Aff{1}{\bk}^3} = [\varpi^{-1}(\I)]\Unit{\star}$. Hence, putting all together, we obtain that
\begin{align*}
	\cZ(L) \circ \cZ(D)(\Unit{\star}) &=  i_!(\varpi|_{\varpi^{-1}(\I)})_! \Unit{} + j_!(\varpi|_{\varpi^{-1}(\ASO{1}{\bk}^*)})_! \Unit{} \\
	&= q(q-1)(q^3-q^2) \,i_!\Unit{\star} + q(q-1)(q^3-2q^2)\,j_!\Unit{\ASO{1}{\bk}^*}.
\end{align*}

In this way, if we want to apply $\cZ(L)$ twice, we need to compute the image $\cZ(L)(j_!\Unit{\ASO{1}{\bk}^*})$. This computation is quite similar to the previous one. First, we again have a commutative diagram whose square is a pullback
	\[
\begin{displaystyle}
   \xymatrix
   {	& \ASO{1}{\bk}^* \times \Aff{1}{\bk}^3 \ar[r] \ar[d] \ar[ld]_{\vartheta} & \ASO{1}{\bk}^* \ar[d]^{j} \\
   		\Aff{1}{\bk} & \Aff{1}{\bk}^4 \ar[r]_{p} \ar[l]^q & \Aff{1}{\bk}
      }
\end{displaystyle}  
\]
The leftmost vertical arrow is the inclusion map and $\vartheta(A, A_1, A_2, B) = BA[A_1, A_2]B^{-1}$. Computing explicitly, we have that
$$
	\vartheta\left(\begin{pmatrix}
	1 & \beta\\
	0 & 1\\
\end{pmatrix}, \begin{pmatrix}
	a_1 & b_1\\
	0 & 1\\
\end{pmatrix}, \begin{pmatrix}
	a_2 & b_2\\
	0 & 1\\
\end{pmatrix}, \begin{pmatrix}
	x & y\\
	0 & 1\\
\end{pmatrix}\right) = \begin{pmatrix}
	1 \, \, & (a_1-1)b_2x - (a_2-1)b_1x + \beta x\\
	0 \, \, & 1\\
\end{pmatrix}.
$$
Hence, $\vartheta$ is again a morphism onto $\ASO{1}{\bk} \subseteq \Aff{1}{\bk}$. Over $\I \in \ASO{1}{\bk}$, the fiber is
\begin{align*}
	\vartheta^{-1}(\I) &= \left\{(\beta, a_1, a_2, x, b_1, b_2, y) \in (\bk^*)^4 \times \bk^3\,|\, (a_2-1)b_1x - (a_1-1)b_2x = \beta\right\} \\
	&= \left((\bk^*)^3 \times \bk^3\right)- \left\{(a_1-1)b_2 - (a_2-1)b_1 = 0\right\} \\ & = \left((\bk^*)^3 \times \bk^3\right)-\varpi^{-1}(\I).
\end{align*}
Thus, $[\vartheta^{-1}(\I)] = (q-1)^3q^3-q(q-1)(q^3-q^2) = q(q-1)(q^4-3q^3+2q^2)$.

Analogously, on $\ASO{1}{\bk}^*$, we have that $\vartheta$ is a locally trivial fibration in the Zariski topology with fiber over $\alpha \neq 0$ given by
\begin{align*}
	F' &= \left\{(\beta, a_1, a_2, x, b_1, b_2, y) \in (\bk^*)^4 \times \bk^3\,|\,(a_1-1)b_2x - (a_2-1)b_1x + \beta = \alpha\right\} \\
	&= \left((\bk^*)^3 \times \bk^3\right)- \left\{(a_1-1)b_2 - (a_2-1)b_1 = \alpha\right\} = \left((\bk^*)^3 \times \bk^3\right)-F.
\end{align*}
Hence, the virtual class of the fiber is $[F'] = (q-1)^3q^3-q(q-1)(q^3-2q^2) = q(q-1)(q^4-3q^3+3q^2)$. Putting together these computations we obtain that
\begin{align*}
	\cZ(L) & \left(  j_!\Unit{\ASO{1}{\bk}^*}\right) = \vartheta_!\Unit{} =  i_!(\vartheta|_{\vartheta^{-1}(\I)})_!\Unit{} + j_!(\vartheta|_{\vartheta^{-1}(\ASO{1}{\bk}^*)})_! \Unit{} \\
	&= q(q-1)(q^4-3q^3+2q^2) \,i_!\Unit{\star} + q(q-1)(q^4-3q^3+3q^2)\,j_!\Unit{\ASO{1}{\bk}^*}.
\end{align*}

Let $W \subseteq \K{\Varrel{\Aff{1}{\bk}}}$ be the submodule generated by the elements $i_!\Unit{\star}$ and $j_!\Unit{\ASO{1}{\bk}^*}$. The previous computation shows that $\cZ(L)(W) \subseteq W$. Furthermore, indeed we have 
 \begin{equation}\label{eqn:W}
 W = \langle \cZ(L)^g(i_!\Unit{\star})\rangle_{g=0}^\infty\, .
 \end{equation} 
 On $W$, the morphism $\cZ(D^\dag): W \to \K{\MHS{\QQ}}$ is given by the projection $\cZ(D^\dag)(i_!\Unit{\star}) = \Unit{\star}$ and $\cZ(D^\dag)(j_!\Unit{\ASO{1}{\bk}^*}) = 0$. Hence, regarding the computation of virtual classes of representation varieties, we can restrict our attention to $W$.

If we want to compute explicitly these classes, observe that, by the previous calculations, on the set of generators $i_!\Unit{\star}, j_!\Unit{\ASO{1}{\bk}^*}$ of $W$, the matrix of $\cZ(L): W \to W$ is
 \begin{equation}\label{eqn:Z}
\cZ(L) = q(q-1) \begin{pmatrix}
	q^3-q^2 & q^4-3q^3+2q^2 \\
	q^3-2q^2 & q^4-3q^3+3q^2\\
\end{pmatrix}.
 \end{equation} 

Since $[\Aff{1}{\bk}] = [\bk^* \times \bk] = q(q-1)$, using the formula of Remark \ref{remark:formula-sigmag}, we obtain that
\begin{equation}\label{eqn:M}
\begin{aligned} \,
	[\Rep{\Aff{1}{\bk}} & (\Sigma_g)] = \begin{pmatrix}
	1 & 0\\
\end{pmatrix}
\begin{pmatrix}
	q^3-q^2 & q^4-3q^3+2q^2 \\
	q^3-2q^2 & q^4-3q^3+3q^2\\
\end{pmatrix}^g\begin{pmatrix}
	1\\
	0\\
\end{pmatrix} \\
&= \begin{pmatrix}
	1 & 0\\
\end{pmatrix}
\begin{pmatrix}
	q-1 & q-1 \\
	-1 & q-1\\
\end{pmatrix}
\begin{pmatrix}
	q^{2g} & 0 \\
	0 & q^{2g}(q-1)^{2g}\\
\end{pmatrix}\begin{pmatrix}
	q-1 & q-1 \\
	-1 & q-1\\
\end{pmatrix}^{-1}\begin{pmatrix}
	1\\
	0\\
\end{pmatrix} \\
&=q^{2g-1}\left((q-1)^{2g}+q-1\right).
\end{aligned}
\end{equation}

\begin{remark} \label{rem:14}
Strictly speaking, this is not the virtual class of $\Rep{\Aff{1}{\bk}}(\Sigma_g)$ on $\K{\Var{\bk}}$ but on its localization by the multiplicative set $S$ generated by $q$ and $q-1$. This has some peculiarities since, as mentioned in Remark \ref{rem:zerodivisor}, 
$q = [\CC]$ is a zero divisor of $\K{\Var{\bk}}$. Hence, the morphism $\K{\Var{\bk}} \to S^{-1}\K{\Var{\bk}}$ is not injective and indeed, its kernel is the annihilator of $q$ or $q-1$. In this way, strictly we have computed the virtual class of the representation variety up to annihilators of $q$ or $q-1$. This is a common feature of the quantum method, due to the requirement of Remark \ref{remark:formula-sigmag} of inverting $[G]$.
\end{remark}

%%%%%%%%%%%%%%%%%%%%%%%%%%%%%%%%%%%%
\subsection{Concluding remarks}
%%%%%%%%%%%%%%%%%%%%%%%%%%%%%%%%%%%%

The previous calculation agrees with the one of Sections \ref{sec:3} and \ref{sec:4}. It may seem that this quantum approach is lengthier than the other methods, but its strength lies in on the fact that it does not depend on finding good geometric descriptions. Therefore, it offers a systematic method that can be applied to more general contexts in which geometric or arithmetic methods fail. For instance, in \cite{GP-2019}, it is computed the virtual classes of $\SL_2(\CC)$-parabolic representation varieties in the general case by means of the quantum method. This result is unavailable using the geometric or the arithmetic approach due to very subtle interaction between the monodromies of the punctures that cannot be captured with the classical methods.

This calculation also shows a general feature of the quantum method. In principle, the $\K{\Var{\bk}}$-module $\cZ(S^1, \star) = \K{\Varrel{G}}$, in which we have to perform the computations, is infinitely generated. However, in all the known computations of $\cZ$, it turn out that the computation can be restricted to a certain finitely generated submodule $W \subseteq \cZ(S^1, \star)$ as it happened above.

This fact that $\cZ(S^1, \star)$ is infinitely generated is in sharp contract with what happens for strict monoidal TQFTs. For $\cZ$ a monoidal TQFT, a straightforward duality argument shows that $\cZ(X)$ is forced to be a finitely generated module (see \cite{Kock:2004}). Indeed, this observation is the starting point of the later developments towards the classification of extended TQFTs \cite{Lurie:2009}, that show that the whole TQFT is determined by this `fully dualizable' object.

In this sense, the lax monoidal TQFT for representation varieties exhibits a mixed behaviour, since it takes values in an infinitely generated module but the calculations can be performed in a finitely submodule, mimicking an strict monoidal TQFT. On the other hand, when dealing with parabolic character varieties, the TQFT quantizing representation varieties is intrinsically infinitely generated. Definitely, further research is needed for shedding light to the interplay between lax monoidal and strict monoidal TQFTs.

\vspace{10mm}


\begin{thebibliography}{99}%

\bibitem{Atiyah:1988}
M.~Atiyah, \textit{Topological quantum field theories}, Inst. Hautes \'Etudes Sci. Publ. Math., {\bf 68} (1989), 175--186.

\bibitem{Baraglia-Hekmati:2016}
D.~Baraglia and P.~Hekmati, \textit{Arithmetic of singular character varieties and their  {$E$}-polynomials}, Proc. Lond. Math. Soc. (3), {\bf 114} (2017), 293--332.

\bibitem{Borisov:2014}
L.~Boriso, \textit{Class of the affine line is a zero divisor in the grothendieck ring},  J. Algebraic Geom., {\bf 27} (2018), 203-209.

\bibitem{Corlette:1988}
K.~Corlette, \textit{Flat {$G$}-bundles with canonical metrics}, {J. Diff. Geom.}, {\bf 28} (1988), 361--382.

\bibitem{Culler-Shalen}
M.~Culler and P.~B. Shalen, \textit{Varieties of group representations and splittings of {$3$}-manifolds}, {Ann. of Math. (2)}, {\bf 117} (1983), 109--146.

\bibitem{DeligneI:1971}
P.~Deligne, \textit{Th\'{e}orie de {H}odge. {I}}, Actes du {C}ongr\`es {I}nternational des {M}ath\'{e}maticiens ({N}ice, 1970), {\bf 1} (1971), 425--430. 

\bibitem{DeligneII:1971}
P.~Deligne, \textit{Th\'{e}orie de {H}odge. {II}}, {Inst. Hautes \'Etudes Sci. Publ. Math.}, {\bf 40} (1971), 5--58.

\bibitem{DeligneIII:1971}
P.~Deligne, \textit{Th\'{e}orie de {H}odge. {III}}, {Inst. Hautes \'Etudes Sci. Publ. Math.}, {\bf 44} (1974), 5--77.

\bibitem{Diaconescu:2017}
D.-E. Diaconescu, \textit{Local curves, wild character varieties, and degenerations}, Preprint arXiv:1705.05707, 2017.

\bibitem{Donaldson1}
S.~K. Donaldson, \textit{A new proof of a theorem of {N}arasimhan and {S}eshadri}, {J. Diff. Geom.}, {\bf 18} (1983), 269--277.

\bibitem{Florentino-Lawton:2012}
C.~Florentino and S.~Lawton, \textit{Singularities of free group character varieties}, {Pacific J. Math.}, {\bf 260} (2012), 149--179.

\bibitem{Freed-Lurie:2010}
D.~S. Freed, M.~J. Hopkins, J.~Lurie and C.~Teleman, \textit{Topological quantum field theories from compact {L}ie groups}, 
In: {CRM Proc. Lecture Notes}, {\bf 50}, Amer. Math. Soc., 2010, 367--403. 

\bibitem{GPLM-2017}
A.~Gonz\'alez-Prieto, M.~Logares, and V.~Mu\~noz, \textit{A lax monoidal {T}opological {Q}uantum {F}ield {T}heory for  
representation varieties}, Bulletin des Sciences Math\'ematiques, to appear.

\bibitem{GPLM-Script-2020}
A.~Gonz\'alez-Prieto, M.~Logares, and V.~Mu\~noz, \textit{Arithmetic Method for $\Aff{1}{k}$}, Available online: \href{http://agt.cie.uma.es/~vicente.munoz/ArithmeticMethodAGL.ipynb}{http://agt.cie.uma.es/{\raise.17ex\hbox{$\scriptstyle\sim$}}vicente.munoz/ArithmeticMethodAGL.ipynb} (software) \\\href{https://github.com/AngelGonzalezPrieto/ArithmeticMethodAGL.git}{https://github.com/AngelGonzalezPrieto/ArithmeticMethodAGL.git} (GitHub repository).

\bibitem{GP-2018a}
A.~Gonz\'alez-Prieto, \textit{Motivic theory of representation varieties via {T}opological
  {Q}uantum {F}ield {T}heories}, arxiv:1810.09714.

\bibitem{GP-2019}
A.~Gonz\'{a}lez-Prieto, \textit{Virtual classes of parabolic {${\rm SL}_2(\mathbb{C})$}-character varieties}, {Adv. Math.}, {\bf 368} (2020), 107--148.
  
\bibitem{Hartshorne}
R.~Hartshorne, \textit{Algebraic geometry},  Graduate Texts in Math., {\bf 52}, Springer-Verlag, 1977.

\bibitem{Hausel-Letellier-Villegas:2013}
T. Hausel, E. Letellier and F.~Rodr\'{i}guez-Villegas,  \textit{Arithmetic harmonic analysis on character and quiver varieties {II}}, Adv. Math., {\bf 234} (2013), 85--128.

\bibitem{Hausel-Rodriguez-Villegas:2008}
T. Hausel and F.~Rodr\'{\i}guez-Villegas, \textit{Mixed {H}odge polynomials of character varieties. With an appendix by Nicholas M. Katz}, 
{Invent. Math.}, {\bf 174} (2008), 555--624.

\bibitem{hausel-thaddeus:2003}
T. Hausel and M. Thaddeus, \textit{Mirror symmetry, {L}anglands duality, and the {H}itchin system}, Invent. Math., {\bf 153} (2003) 1:197--229.

\bibitem{Hitchin}
N.~J. Hitchin, \textit{The self-duality equations on a {R}iemann surface}, {Proc. London Math. Soc. (3)}, {\bf 55} (1987), 59--126.

\bibitem{Kock:2004}
J.~Kock, \textit{Frobenius algebras and 2{D} topological quantum field theories},
{London Mathematical Society Student Texts}, {\bf 59}, Cambridge University Press, 2004.

\bibitem{Lawton-Sikora:2017}
S. Lawton and A.~S. Sikora, \textit{Varieties of characters}, {Algebr. Represent. Theory}, {\bf 20} (2017), 1133--1141.

\bibitem{LMN}
M.~Logares, V.~Mu\~noz, and P.~E. Newstead, \textit{Hodge polynomials of {${\rm SL}(2,\mathbb{C})$}-character varieties
for curves of small genus}, {Rev. Mat. Complut.}, {\bf 26} (2013), 635--703.

\bibitem{LuMa}
A.~ Lubotzky and A.~Magid, \textit{Varieties of representations of finitely generated groups}, Mem. Amer. Math. Soc. {\bf 58} (1985).

\bibitem{Lurie:2009}
J.~Lurie, \textit{On the classification of topological field theories},
In: Current Developments in Mathematics, Internat. Press, 2009, 129--280.

\bibitem{Martinez:2017}
J.~Mart\'inez, \textit{$E$-polynomials of ${\rm PGL}(2,\mathbb{C})$-character varieties of surface  groups}, arxiv:1705.04649.

\bibitem{MM:2016}
J.~Mart\'inez and V.~Mu\~noz, \textit{$E$-polynomials of {${\rm SL}(2,\Bbb{C})$}-character varieties of complex 
curves of genus $3$}, {Osaka J. Math.}, {\bf 53} (2016), 645--681.

\bibitem{MM}
J.~Mart\'inez and V.~Mu\~noz, \textit{E-polynomials of the {${\rm SL}(2,\Bbb C)$}-character varieties of  
surface groups}, {Int. Math. Res. Not.}, {\bf 2016} (2016), 926--961.

\bibitem{mereb}
M.~Mereb, \textit{On the {$E$}-polynomials of a family of {${\rm SL}_n$}-character varieties}, {Math. Ann.}, {\bf 363} (2015), 857--892.

\bibitem{Nagata:1960}
M. Nagata, \textit{On the fourteenth problem of Hilbert}. 1960 Proc. Internat. Congress Math. (1958) pp. 459–462 Cambridge Univ. Press, New York.

\bibitem{Nagata:1963}
M. Nagata, \textit{Invariants of a group in an affine ring}, J. Math. Kyoto Univ., {\bf 3} (1963/1964), 369--377.

\bibitem{NS}
M.~S. Narasimhan and C.~S. Seshadri, \textit{Stable and unitary vector bundles on a compact {R}iemann surface}, {Ann. of Math. (2)}, 
{\bf 82} (1965), 540--567.

\bibitem{Newstead:1978}
P.~E. Newstead, \textit{Introduction to moduli problems and orbit spaces}, Tata Institute of Fundamental Research Lectures on Mathematics and Physics, {\bf 51} (1978), Tata Institute of Fundamental Research, Bombay; by the Narosa Publishing House, New Delhi.

\bibitem{Peters-Steenbrink:2008}
C.~A.~M. Peters and J.~H.~M. Steenbrink. \textit{Mixed {H}odge structures}, Ergebnisse der
  Mathematik und ihrer Grenzgebiete. 3. Folge. A Series of Modern Surveys in
  Mathematics [Results in Mathematics and Related Areas. 3rd Series. A Series
  of Modern Surveys in Mathematics], {\bf 52} (2008), Springer-Verlag, Berlin.

\bibitem{Simpson:1992}
C.~T.~ Simpson, \textit{Higgs bundles and local systems}, {Inst. Hautes \'Etudes Sci. Publ. Math.}, {\bf 75} (1992), 5--95.

\bibitem{SimpsonI}
C.~T.~ Simpson, \textit{Moduli of representations of the fundamental group of a 
smooth projective variety. {I}}, {Inst. Hautes \'Etudes Sci. Publ. Math.}, {\bf 79} (1994), 47--129.

\bibitem{SimpsonII}
C.~T.~ Simpson, \textit{Moduli of representations of the fundamental group of a smooth
  projective variety. II}, {Inst. Hautes \'Etudes Sci. Publ. Math.}, {\bf 80} (1995), 5--79.
  
\bibitem{Strominger-Yau-Zaslow}
A.~Strominger, S.-T.~Yau, and E.~Zaslow, \textit{Mirror symmetry is {$T$}-duality}, {Nuclear Phys. B}, {\bf 479} (1996), 243--259.

\bibitem{Weibel}
C.~A. Weibel, \textit{The {$K$}-book. An introduction to algebraic $K$-theory}, 
{Graduate Studies in Mathematics}, {\bf 145}, Amer. Math. Soc., 2013.

\bibitem{Witten:1988}
E.~Witten, \textit{Topological quantum field theory}, {Comm. Math. Phys.}, {\bf 102} (1988), 353--389.

\end{thebibliography}
\end{document}